\newtheorem{thm}{Theorem}[section]
\newtheorem{lem}[thm]{Lemma}
\newtheorem{cor}[thm]{Corollary}
\newtheorem{prop}[thm]{Proposition}
\newtheorem*{conjecture*}{Conjecture}
\newtheorem*{thm*}{Theorem}
\theoremstyle{plain} % just in case the style had changed
\newcommand{\thistheoremname}{}
\newtheorem*{genericthm}{\thistheoremname}
\newenvironment{namedthm}[1]
  {\renewcommand{\thistheoremname}{#1}%
   \begin{genericthm}}
  {\end{genericthm}}
\theoremstyle{remark}
\newtheorem*{remark}{Remark}
\theoremstyle{definition}
\newtheorem*{define}{Definition}
\newcommand{\N}{\mathbb{N}}
\newcommand{\Z}{\mathbb{Z}}
\newcommand{\Syl}[1]{\mathcal{#1}}
\newcommand{\Sub}{\operatorname{Sub}}
\newcommand{\Dic}{\operatorname{Dic}}
\newskip\aline \newskip\halfaline
\title{\vspace{-1cm}Classifying groups with a small number of subgroups}
\author{Alexander Betz and David A.\ Nash\\}
\date{\today}
\begin{document}
\maketitle
\begin{abstract}
We provide lower bounds on the number of subgroups of a group $G$ as a function of the primes and exponents appearing in the prime factorization of $|G|$.  Using these bounds, we classify all abelian groups with 22 or fewer subgroups, and all non-abelian groups with 19 or fewer subgroups.  This allows us to extend the integer sequence A274847 \cite{OEIS} introduced by Slattery in \cite{Slattery}.
\end{abstract}

It is a classic problem in a first course in group theory to show that a group $G$ has exactly two subgroups if and only if $G \cong \Z_p$ for a prime $p$.  The main idea here is to observe that if $|\Sub G|=2$ or if $G \cong \Z_p$, then every non-identity element $x \in G$ must by necessity generate all of $G$, i.e.\ $\langle x \rangle = G$ for all $x \in G$.  Slightly less frequently, a course may follow up by considering groups $G$ with exactly three or four subgroups.  In those cases, it turns out that we can again argue that $G$ must be cyclic.  

Indeed, if $|\Sub G| = 3$ and $H \leq G$ is the unique, non-trivial, proper subgroup of $G$, then observe that for any $x \in G \setminus H$, we must have $\langle x \rangle = G$ as these elements are non-trivial, must generate a subgroup of $G$, and cannot generate the trivial subgroup or $H$.  Since $G$ must be cyclic, the fact that cyclic groups have exactly one subgroup for each positive divisor of $|G|$ implies that $|G|=p^2$ for some prime $p$ and thus $G \cong \Z_{p^2}$ as cyclic groups of order $|G|$ are unique up to isomorphism.

Similarly, if $|\Sub G| = 4$ and $H \neq K$ are the two non-trivial subgroups, then recall that $H \cup K \leq G$ if and only if $H \leq K$ or $K \leq H$.  It follows that $H \cup K \neq G$ and thus, there exists some $x \in G \setminus (H \cup K)$.  Once again, $\langle x \rangle = G$ and $G$ is cyclic.  As before, a cyclic group $G$ must have exactly one subgroup for each divisor of $|G|$, hence it follows that $G \cong \Z_{pq}$ or $\Z_{p^3}$ for primes $p$ and $q$.  We summarize these classic results below.

\begin{namedthm}{Classic Results} \label{thm: Classic Results}
\leavevmode
\begin{enumerate}
    \item If $|\Sub G| = 2$, then $G \cong \Z_p$ for some prime $p$.
    \item If $|\Sub G| = 3$, then $G \cong \Z_{p^2}$ for some prime $p$.
    \item If $|\Sub G| = 4$, then $G \cong \Z_{pq}$ or $G \cong \Z_{p^3}$ for some primes $p$ and $q$.
\end{enumerate}
\end{namedthm}

These classic results beg the question: Which (necessarily finite) groups $G$ have exactly $k$ subgroups when $k \geq 5$?  Fortunately this becomes much more interesting moving forward as $G$ need not be cyclic when $|\Sub G| \geq 5$.  Thus, from here on we will need a completely different approach.

Miller explored this topic previously in a series of obscure and 
terse papers \cite{Miller1, Miller2, Miller3, Miller4, Miller5} in which he claims to classify the groups with 16 or fewer subgroups, but it is unclear to the authors exactly how he arrives at his conclusions.  Despite that, his results agree with ours, except in the case when $|\Sub G| = 14$ where he seems to have skipped a case, causing him to miss $S_3 \times \Z_3$ and $\Z_3 \rtimes \Z_{32}$.  Given the assertions within, it is certainly clear that Miller is not applying the techniques we use here.  Recently, Slattery \cite{Slattery} explored this idea once more; reducing any group $G$ by factoring out any cyclic central Sylow $p$-subgroups of $G$ first. Using this method, he worked to classify groups with 12 or fewer subgroups up to \emph{similarity} defined in the following sense:

\begin{define}[From \cite{Slattery}]
Let $G$ and $H$ be finite groups. Write $G = P_1 \times P_2 \times \cdots \times P_c \times \widetilde{G}$ and $H = Q_1 \times \cdots \times Q_d \times \widetilde{H}$, where $P_i$ (resp.\ $Q_j$) are cyclic central Sylow subgroups within $G$ (resp.\ $H$).  Then $G$ is \emph{similar} to $H$ if and only if the following conditions hold:
\begin{itemize}
    \item $\widetilde{G}$ is isomorphic to $\widetilde{H}$.
    \item $c = d$
    \item $n_i = m_i$ for some reordering, where $|P_i|=p_i^{n_i}$ and $|Q_i| = q_i^{m_i}$.
\end{itemize}
\end{define}

Using this definition of similarity, groups that are similar will always have the same number of subgroups (see Theorem~\ref{thm:GxHsubgroups} below).  Slattery was encouraged to submit a sequence (A274847 \cite{OEIS}) to the Online Encyclopedia of Integer Sequences  which counts the number of similarity classes of groups with $k$ subgroups.  Unfortunately, while his results for groups with 9 or fewer subgroups agree with ours and those of Miller in \cite{Miller1}, he didn't appear to know about Miller's other papers and has a minor omission in his final table of results.  More specifically, he correctly identifies $D_8$, the dihedral group of order 8, as a non-abelian $p$-group with 10 subgroups, but mistakenly omits it in his final tables causing him to under count the groups with 10 subgroups.  Thus, the 10th term in sequence A274847 should be 12 rather than 11.  As a further point of clarification, in what follows, we stick with GAP\footnote{Groups, Algorithms, and Programming -- see \url{https://www.gap-system.org}.} notation for the groups we list.  As an example, for the \emph{extraspecial} group of order 27, $\langle x,y \mid x^9=y^3=e, yxy^{-1}=x^4\rangle$, which Slattery lists as $E_{27}$, we use the label $M_{27}$ provided by GAP.

Our approach is significantly different from Slattery's as well and takes us significantly further.  In Section~\ref{sec:abelian} we first deal with the case when $G$ is abelian by exploring the number of subgroups of abelian $p$-groups and then considering products of these groups for different primes.  In Section~\ref{sec:nonabelian}, we then approach non-abelian groups using the Sylow Theorems and the Orbit-Stabilizer Theorem to place a lower bound on the number of subgroups of $G$ as a function of the primes and exponents in the prime factorization of $|G|$.  This allows us to greatly reduce the search space for non-abelian groups with 19 or fewer subgroups.  Using the complete lists of similarity classes of abelian and non-abelian groups with 19 or fewer subgroups we then give the first 19 terms in sequence A274847.

%%%%%%%%%%%%%%%%%%%%%%%%%%%%%%%%%%%%%%%%%%%%%%%%%
\section{Abelian groups}\label{sec:abelian}
%%%%%%%%%%%%%%%%%%%%%%%%%%%%%%%%%%%%%%%%%%%%%%%%%
Cyclic groups are a straightforward case to begin with as it is well-known that each cyclic group $G$ has exactly one subgroup for each divisor of $|G|$.  Thus, given a cyclic group $G$ of order $|G|=p_1^{a_1}p_2^{a_2} \cdots p_n^{a_n}$, it follows that $|\Sub G| = (a_1+1)(a_2+1)\dots(a_n+1)$.  For one, this implies immediately that there exists at least one group with exactly $k$ subgroups for each $k \in \N$ (namely the group $\Z_{p^{k-1}}$).  In addition, there is exactly one cyclic group of order $|G|$ up to isomorphism, thus we may work backwards to quickly find all cyclic groups with a fixed number of subgroups. 

More generally, by the Fundamental Theorem of Finite Abelian Groups, every such group can be written as a direct product of cyclic groups of prime power orders.  Moreover, for each prime $p$ dividing $|G|$, we may combine the cyclic $p$-groups in the product into a single component subgroup $H_{p^a}$, where $p^a$ is the highest power of $p$ which divides $|G|$.  In this way, we can think of any finite abelian group as a direct product of abelian $p$-groups for different primes $p$.  This is a useful perspective given the following key result:

\begin{thm}\label{thm:GxHsubgroups}
Let $G$ and $H$ be groups. If $(|G|,|H|)=1$ then $|\Sub G \times H| = |\Sub G| \cdot |\Sub H|$
\end{thm}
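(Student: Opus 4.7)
The plan is to exhibit an explicit bijection
\[
\Phi \colon \Sub G \times \Sub H \to \Sub(G \times H), \qquad \Phi(A,B) = A \times B.
\]
The map is clearly well-defined, since $A \times B$ is a subgroup of $G \times H$ whenever $A \leq G$ and $B \leq H$. Injectivity follows immediately from the two coordinate projections: if $A \times B = A' \times B'$ then projecting onto each factor recovers $A = A'$ and $B = B'$. So the real content is surjectivity, and this is where coprimality must be used.

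To prove surjectivity, I would start with an arbitrary subgroup $K \leq G \times H$ and let $\pi_G, \pi_H$ be the projection homomorphisms. Set $K_G = \pi_G(K) \leq G$ and $K_H = \pi_H(K) \leq H$. One containment, $K \leq K_G \times K_H$, is automatic. The task is to upgrade this to equality, and this is the main (and really only) obstacle in the proof. The strategy is purely arithmetic: compare orders, using Lagrange's theorem together with the hypothesis $(|G|,|H|)=1$.

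For the order comparison, note that $|K_G|$ divides $|G|$ and $|K_H|$ divides $|H|$, so $(|K_G|,|K_H|) = 1$. On the other hand, the projections $\pi_G|_K \colon K \twoheadrightarrow K_G$ and $\pi_H|_K \colon K \twoheadrightarrow K_H$ are surjective homomorphisms, so both $|K_G|$ and $|K_H|$ divide $|K|$. By coprimality their product also divides $|K|$, giving
\[
|K_G|\cdot |K_H| \;\Big|\; |K| \;\leq\; |K_G \times K_H| \;=\; |K_G|\cdot |K_H|.
\]
Hence $|K| = |K_G \times K_H|$, forcing $K = K_G \times K_H = \Phi(K_G, K_H)$.

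Finally, since $\Phi$ is a bijection between finite sets, we conclude $|\Sub(G \times H)| = |\Sub G \times \Sub H| = |\Sub G|\cdot|\Sub H|$. I expect no hidden subtleties beyond the order-comparison step; everything else is bookkeeping with projections.
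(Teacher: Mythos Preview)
Your proof is correct, and the overall framework matches the paper's: both define $K_G$ and $K_H$ as the projections of an arbitrary subgroup $K \leq G \times H$ and then argue that $K = K_G \times K_H$. The difference lies in how that equality is established. The paper works element-wise: given $(g,h_g) \in K$, coprimality of $o(g)$ and $o(h_g)$ means the cyclic group $\langle (g,h_g)\rangle$ has order $o(g)o(h_g)$, and a Chinese-Remainder-style choice of exponent then produces $(g,e_H) \in K$; closure gives $K_G \times K_H \subseteq K$. You instead compare orders globally: surjectivity of the restricted projections forces $|K_G|$ and $|K_H|$ to divide $|K|$, coprimality makes their product divide $|K|$, and the trivial containment $K \subseteq K_G \times K_H$ closes the sandwich. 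Your argument is a bit slicker and avoids any element-level manipulation, while the paper's approach is more explicit about which elements lie in $K$; either way the essential input is the same coprimality hypothesis.
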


\begin{proof}
Certainly $G' \times H' \leq G \times H$ for all $G' \leq G$ and $H' \leq H$, so it suffices to show that every subgroup $K \leq G \times H$ can be split as $K = G' \times H'$ for some $G'$ and $H'$. Observe, since $G \times H =\{ (g,h) \mid g \in G, h \in H\}$, if $K \leq G \times H$, then we may define
$$K_G=\{ g \in G \mid (g,h) \in K \text{~for some~} h \in H \}$$
$$K_H=\{ h \in H \mid (g,h) \in K \text{~for some~} g \in G \}.$$ Our goal is to show that $K = K_G \times K_H$. By our assumption, and Lagrange's Theorem, we know $o(g) \mid |G|$ and $(o(g),o(h))=1~ \forall g \in G, h \in H$.  Consider $g \in K_G$ with $(g,h_g) \in K$.  Since $g$ and $h_g$ have coprime orders, it follows that $\langle (g,h_g) \rangle$ will be the cyclic group $\Z_{o(g)o(h_g)}$. Since $\langle (g,h_g) \rangle \leq K$, it follows that $(g,e_H) \in K ~ \forall g \in K_G$.  A similar argument will show that $(e_G,h) \in K ~ \forall h \in K_H$ as well. 

Since $K \leq G \times H$, by closure we have $(g,h) \in K ~ \forall g \in K_G,~ h \in K_H$.
It follows that $K_G \times K_H \subseteq K$ and thus, since $K \subseteq K_G \times K_H$ by definition, we have $K = K_G \times K_H$.  The above argument also shows that $K_G \cong K \cap (G \times \{e_H\}$), thus $K_G \leq G$.  Similarly, $K_H \leq H$, which completes the proof.
\end{proof}

Given this result and the perspective above, we may count the number of subgroups of any finite abelian group as long as we know the number of subgroups of its abelian $p$-group components.

\begin{cor} \label{cor: abelian p prime subgroups}
If $G$ is an abelian group as defined above with $|G| = p_1^{a_1}p_2^{a_2}\cdots p_n^{a_n}$ then $|\Sub G| = |\Sub H_{p_1^{a_1}}| \cdot |\Sub H_{p_2^{a_2}}| \cdots |\Sub H_{p_n^{a_n}}|$. Furthermore this implies that if an abelian group $G$ has a prime number of subgroups then $G$ must be a $p$-group.
\end{cor}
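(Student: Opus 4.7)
The corollary is a direct consequence of Theorem~\ref{thm:GxHsubgroups}, so my plan is to run a short induction on the number $n$ of distinct prime divisors of $|G|$ and then use the resulting product formula to deduce the second statement.

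For the first claim, the base case $n=1$ is immediate since then $G = H_{p_1^{a_1}}$. For the inductive step I would write $G = H_{p_1^{a_1}} \times G'$ where $G' = H_{p_2^{a_2}} \times \cdots \times H_{p_n^{a_n}}$. Because $p_1$ is distinct from each of $p_2,\ldots,p_n$, we have $\gcd(|H_{p_1^{a_1}}|,|G'|) = 1$, so Theorem~\ref{thm:GxHsubgroups} applies and gives
\[
|\Sub G| = |\Sub H_{p_1^{a_1}}| \cdot |\Sub G'|.
\]
The inductive hypothesis then expands $|\Sub G'|$ as the product over $i = 2, \ldots, n$, yielding the claimed formula.

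For the second claim, the key observation is that each factor $H_{p_i^{a_i}}$ is a nontrivial group (since $a_i \geq 1$), hence admits at least the trivial subgroup and itself, so $|\Sub H_{p_i^{a_i}}| \geq 2$. Thus the formula above expresses $|\Sub G|$ as a product of $n$ integers each at least $2$. If this product is prime, then exactly one factor can occur, forcing $n=1$ and making $G$ a $p$-group.

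There is no real obstacle here; the only thing to be careful about is the coprimality condition required to invoke Theorem~\ref{thm:GxHsubgroups} at the inductive step (handled by regrouping all primes other than $p_1$ into a single factor $G'$) and the remark that each $|\Sub H_{p_i^{a_i}}|\geq 2$ in order to conclude primality forces $n=1$.
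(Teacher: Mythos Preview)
Your proof is correct and matches the paper's approach: the paper states this corollary as an immediate consequence of Theorem~\ref{thm:GxHsubgroups} without giving a separate proof, and your short induction on $n$ together with the observation that each $|\Sub H_{p_i^{a_i}}| \geq 2$ is exactly the routine unpacking of that implication.
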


This leaves us to describe the number of subgroups in abelian $p$-groups.  The case when an abelian $p$-group has exactly two cyclic factors has been fully described by Ali and Al-Awami \cite{Abelian Sub thesis}.\footnote{Note: The statement of Theorem~4.2.1 in \cite{Abelian Sub thesis} has a typo, but their proof proves the statement given here and \cite{MT Abelian Sub} confirms this result.}

\begin{thm}[Theorem~4.2.1 in \cite{Abelian Sub thesis}] \label{prop: ZxZ class}~\\
If $G \cong \Z_{p^a} \times \Z_{p^b}$ where $a \leq b$ then the number of subgroups of $G$ satisfies \[|\Sub G| = \frac{1}{(p-1)^2} \left[(b-a+1)p^{a+2}-(b-a-1)p^{a+1}-(b+a+3)p+(b+a+1) \right] \]
\end{thm}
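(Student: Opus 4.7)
The plan is to enumerate the subgroups of $G \cong \Z_{p^a} \times \Z_{p^b}$ via Goursat's Lemma, which gives a bijection between subgroups of $A \times B$ and quintuples $(A_1, A_2, B_1, B_2, \phi)$ with $A_2 \trianglelefteq A_1 \leq A$, $B_2 \trianglelefteq B_1 \leq B$, and $\phi \colon A_1/A_2 \xrightarrow{\sim} B_1/B_2$ an isomorphism. Because $A = \Z_{p^a}$ and $B = \Z_{p^b}$ are cyclic $p$-groups, their subgroup lattices are totally ordered chains, so I can index the pair $A_2 \leq A_1$ by integers $0 \leq i_2 \leq i_1 \leq a$ (with $|A_r| = p^{i_r}$) and the pair $B_2 \leq B_1$ by $0 \leq j_2 \leq j_1 \leq b$. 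The existence of an isomorphism $\phi$ forces $i_1 - i_2 = j_1 - j_2 =: k$, and for each such $k$ the number of isomorphisms $\Z_{p^k} \to \Z_{p^k}$ is $|\Aut(\Z_{p^k})|$, which equals $1$ when $k=0$ and $p^{k-1}(p-1)$ when $k \geq 1$.

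For a fixed $k$ between $0$ and $a$ (since $k \leq \min(a,b) = a$), the number of ways to choose $(i_2,i_1)$ with difference $k$ in $\{0,\dots,a\}$ is $a - k + 1$, and similarly $b - k + 1$ for $(j_2,j_1)$. Combining these counts yields
\begin{equation*}
|\Sub G| \;=\; (a+1)(b+1) \;+\; (p-1)\sum_{k=1}^{a}(a-k+1)(b-k+1)\,p^{k-1}.
\end{equation*}
I would quickly sanity-check this in two small cases: $a=b=1$ should give $p+3$ (the known count for $\Z_p \times \Z_p$), and $a=1, b=2$ should give $2p+4$.

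The main obstacle is the algebraic simplification of the sum above to match the stated closed form. Expanding $(a-k+1)(b-k+1) = (a+1)(b+1) - (a+b+2)k + k^2$ reduces the problem to evaluating
\begin{equation*}
\sum_{k=1}^{a} p^{k-1}, \quad \sum_{k=1}^{a} k\,p^{k-1}, \quad \sum_{k=1}^{a} k^2\,p^{k-1},
\end{equation*}
each of which has a standard closed form obtained by differentiating the geometric series $\sum_{k=0}^{a} x^k = (x^{a+1}-1)/(x-1)$ once or twice and specializing at $x=p$. After substituting these closed forms, multiplying by $(p-1)$, placing everything over the common denominator $(p-1)^2$, and collecting the $p^{a+2}$, $p^{a+1}$, $p$, and constant terms according to their coefficients in $a$ and $b$, the polynomial in the theorem statement should fall out. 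This last step is essentially bookkeeping, but it must be done carefully to recover the exact coefficients $(b-a+1)$, $-(b-a-1)$, $-(b+a+3)$, and $(b+a+1)$, since any cross-term error would be invisible to the small-case checks that have $a=b$ or $b = a+1$.
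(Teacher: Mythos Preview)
The paper does not give its own proof of this theorem: it is quoted verbatim from Ali and Al-Awami's thesis (with a footnote correcting a typo in the original statement), so there is no in-paper argument to compare against.

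Your Goursat-Lemma approach is correct and complete as outlined. The parametrization of subgroup pairs in each cyclic factor by chain indices, the identification $k=i_1-i_2=j_1-j_2$, the automorphism count $|\Aut(\Z_{p^k})|=p^{k-1}(p-1)$ for $k\geq 1$, and the resulting expression
\[
|\Sub G|=(a+1)(b+1)+(p-1)\sum_{k=1}^{a}(a-k+1)(b-k+1)\,p^{k-1}
\]
are all right, and the reduction to the three power sums via the expansion $(a-k+1)(b-k+1)=(a+1)(b+1)-(a+b+2)k+k^2$ is the natural way to close it out. One small remark: the second-moment sum $\sum_{k=1}^{a}k^{2}p^{k-1}$ will introduce a $(p-1)^3$ in the denominator before cancellation, so when you collect terms it is cleaner to first compute $(p-1)\sum k^{2}p^{k-1}$ and $(p-1)\sum k\,p^{k-1}$ separately over $(p-1)^2$; otherwise the bookkeeping is exactly as you describe. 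Your sanity checks at $(a,b)=(1,1)$ and $(1,2)$ are well chosen, and you are right that a further check with $b\geq a+2$ (say $a=1$, $b=3$, giving $3p+5$) would guard against errors hidden by the symmetry of the small cases.
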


\begin{comment}
Using this powerful result, Table~\ref{cor ZxZ} lists several special cases that will be relevant in what follows.

\begin{table}[h]
    \centering 
\begin{tabular}{c|c||c|c}
$G$ & $|\Sub G|$ & $G$ & $|\Sub G|$\\
\hline
$\Z_p \times \Z_p$ & $p+3$ & 
$\Z_{p^2} \times \Z_{p^2}$ & $p^2+3p+5$ \\
$\Z_{p^2} \times \Z_p$ & $2p+4$ & 
$\Z_{p^3} \times \Z_{p^2}$ & $2p^2+4p+6$ \\
$\Z_{p^3} \times \Z_p$ & $3p+5$ &
$\Z_{p^4} \times \Z_{p^2}$ & $3p^2+5p+7$ \\
$\Z_{p^4} \times \Z_p$ & $4p+6$ &
$\Z_{p^5} \times \Z_{p^2}$ & $4p^2+6p+8$\\
$\Z_{p^5} \times \Z_p$ & $5p+7$ &
$\Z_{p^3} \times \Z_{p^3}$ & $p^3+3p^2+5p+7$\\
$\Z_{p^6} \times \Z_p$ & $6p+8$ &
$\Z_{p^4} \times \Z_{p^3}$ & $2p^3 + 4p^2+6p+8$
\end{tabular}
\caption{The number of subgroups of abelian $p$-groups with exactly two cyclic factors.}
    \label{cor ZxZ}
\end{table}
$\Z_{p^7} \times \Z_p$ has $7p+9$ subgroups.
\end{comment}

In addition to this powerful result, we also wish to consider abelian $p$-groups with more than two factors.  We therefore address a few special cases which will be enough for our purposes.

\begin{prop} \label{prop: 3ZP}
If $G \cong (\Z_p)^n = \Z_p \times \cdots \times \Z_p$, then 
$$|\Sub G| = \sum_{i=0}^n {{n}\choose{i}}_p, \text{~where~} {{n}\choose{i}}_p = \frac{(1-p^n)(1-p^{n-1})\cdots(1-p^{n-i+1})}{(1-p)(1-p^2)\cdots(1-p^i)}.$$
\end{prop}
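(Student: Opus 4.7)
The plan is to exploit the fact that $G = (\Z_p)^n$ is naturally an $n$-dimensional vector space over $\F_p$: scalar multiplication by elements of $\F_p$ is well-defined since every non-identity element has order $p$. Under this identification, the subgroups of $G$ are precisely the $\F_p$-subspaces of $G$, since any additive subgroup is automatically closed under scalar multiplication when the scalars are $0, 1, \dots, p-1$. Thus counting subgroups reduces to counting subspaces dimension by dimension, and the formula we want is exactly $|\Sub G| = \sum_{i=0}^n (\text{number of } i\text{-dimensional subspaces})$, which reduces the proposition to the claim that the number of $i$-dimensional subspaces of $\F_p^n$ equals $\binom{n}{i}_p$ as written.

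To prove that subspace count, I would use the standard double counting via ordered bases. First I count the number of ordered linearly independent $i$-tuples in $\F_p^n$: the first vector can be any of the $p^n - 1$ nonzero vectors, the second any of the $p^n - p$ vectors outside the span of the first, and so on, giving $(p^n-1)(p^n-p)\cdots(p^n - p^{i-1})$. Next, each $i$-dimensional subspace $W$ contributes exactly $(p^i - 1)(p^i - p)\cdots(p^i - p^{i-1})$ ordered bases, by the same reasoning applied to $W$. Dividing yields
\[
\#\{W \leq \F_p^n : \dim W = i\} = \frac{(p^n-1)(p^n-p)\cdots(p^n-p^{i-1})}{(p^i-1)(p^i-p)\cdots(p^i-p^{i-1})}.
\]

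The remaining step is to rewrite this ratio in the form stated in the proposition. In the numerator I would factor $p^{0+1+\cdots+(i-1)}$ out of the $i$ factors, and similarly in the denominator, producing a common factor of $p^{i(i-1)/2}$ that cancels. This turns each factor $p^n - p^k$ into $p^k(p^{n-k}-1)$, and after pulling out signs to convert $p^{n-k}-1 = -(1-p^{n-k})$ (an even number of sign changes in numerator and denominator) one recovers exactly
\[
\binom{n}{i}_p = \frac{(1-p^n)(1-p^{n-1})\cdots(1-p^{n-i+1})}{(1-p)(1-p^2)\cdots(1-p^i)}.
\]
Summing over $i$ from $0$ to $n$ (with the $i=0$ term giving the trivial subgroup) completes the proof.

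The main obstacle is essentially bookkeeping: matching the sign convention and power-of-$p$ cancellation so that the clean subspace-counting ratio becomes the precise Gaussian binomial expression written in the statement. There is no conceptual difficulty beyond recognizing $(\Z_p)^n$ as an $\F_p$-vector space, which is the crucial first move.
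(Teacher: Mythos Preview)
Your proposal is correct and follows essentially the same approach as the paper: identify $(\Z_p)^n$ with an $n$-dimensional vector space over $\F_p$, observe that subgroups coincide with subspaces, and count subspaces of each dimension via the Gaussian binomial coefficient. The only difference is that the paper simply cites the subspace-counting formula as well-known, whereas you supply the standard ordered-basis derivation and the algebraic manipulation matching it to the stated form.
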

\begin{proof} 
The group $(\Z_p)^n$ is also an $n$-dimensional vector space over $\Z_p$ and each subgroup of order $p^i$ in $G$ corresponds to an $i$-dimensional subspace.  It is well-known that the Gaussian binomial coefficient ${{n}\choose{i}}_p$ counts the number of $i$-dimensional subspaces of an $n$-dimensional vector space over $\Z_p$.
\end{proof}

In addition to these specific cases, a recent paper by Aivazidis and M\"uller \cite{AM} gives more general lower bounds on the number of subgroups in non-cyclic $p$-groups.\footnote{We thank Aivazidis for bringing this to our attention.  In an earlier version we proved similar results directly.}  For example, the results below imply that any non-cyclic abelian $p$-group with fewer than 23 subgroups must have order $p^a$ for $a \leq 7$.

\begin{thm}[Theorem A from \cite{AM}]\label{cor: 3+-group bound}
Let $G$ be a non-cyclic group with $|G|=p^a$ for a prime $p \geq 3$.  Then $|\Sub G| \geq (a-1)(p+1)+2$, with equality if and only if $G \cong \Z_{p^{a-1}} \times \Z_p$ or $G \cong M_{p^a} = \langle x,y \mid x^{p^{a-1}} = y^p = e, yxy^{-1} = x^{1+p^{a-2}}$.
\end{thm}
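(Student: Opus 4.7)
Plan: My approach is to deduce the lower bound from the Kulakoff--Miller congruence on $p$-groups and identify the equality cases via a pigeonhole argument on maximal subgroups. For each $0 \le k \le a$, let $N_k$ denote the number of subgroups of $G$ of order $p^k$. Kulakoff's theorem states that for a non-cyclic $p$-group of order $p^a$ with $p$ odd, $N_k \equiv 1 + p \pmod{p^2}$ for every $1 \le k \le a-1$; in particular $N_k \ge 1+p$. Summing with $N_0 = N_a = 1$ gives
\[ |\Sub G| \;\ge\; 2 + (a-1)(p+1), \]
the claimed lower bound.

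For equality one must have $N_k = p+1$ for every $1 \le k \le a-1$. The base case $a=2$ is immediate: $\Z_p \times \Z_p$ is the only non-cyclic group of order $p^2$ and matches both the bound and the type $\Z_{p^{a-1}} \times \Z_p$. For $a \ge 3$, the condition $N_{a-1} = p+1$ forces $G/\Phi(G) \cong \Z_p \times \Z_p$, so $|\Phi(G)| = p^{a-2}$. The key structural claim, and the main obstacle, is to show that $G$ then has a cyclic maximal subgroup; once this is done, the classical classification of $p$-groups containing a cyclic subgroup of index $p$ (for $p$ odd) leaves only $\Z_{p^{a-1}} \times \Z_p$ and $M_{p^a}$.

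To extract a cyclic maximal subgroup, suppose for contradiction that all $p+1$ maximal subgroups $M_1, \ldots, M_{p+1}$ of $G$ are non-cyclic. Applying Kulakoff's theorem to each $M_i$ (of order $p^{a-1}$) gives at least $p+1$ subgroups of $M_i$ of each intermediate order $p^k$, $1 \le k \le a-2$. Each such subgroup of $M_i$ is also a subgroup of $G$, but $G$ itself has only $p+1$ subgroups of order $p^k$ in total, so every subgroup of $G$ of order $p^k$ must lie in every $M_i$ and hence in $\Phi(G) = \bigcap_i M_i$. Taking $k = a-2$ forces all $p+1$ subgroups of $G$ of order $p^{a-2}$ to sit inside $\Phi(G)$; but $|\Phi(G)| = p^{a-2}$, so $\Phi(G)$ is itself a single subgroup of that order and cannot accommodate the other $p$, a contradiction.

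Finally, a direct computation using Theorem \ref{prop: ZxZ class} verifies that $\Z_{p^{a-1}} \times \Z_p$ attains $(a-1)(p+1)+2$ subgroups. For $M_{p^a}$ (with $p$ odd and $a \ge 3$), the classical fact that its subgroup lattice is isomorphic to that of $\Z_{p^{a-1}} \times \Z_p$ yields the same count; alternatively, one can enumerate its subgroups directly from the defining relations.
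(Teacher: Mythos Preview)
The paper does not actually prove this statement: it is quoted verbatim as Theorem~A of Aivazidis--M\"uller \cite{AM}, and the authors remark in a footnote that an earlier draft contained a direct argument which was later replaced by the citation. So there is no proof in the paper to compare against.

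On its own merits, your argument is correct. The inequality is an immediate consequence of Kulakoff's congruence $N_k \equiv 1+p \pmod{p^2}$ for $1 \le k \le a-1$, and your treatment of the equality case is clean: assuming every maximal subgroup $M_i$ is non-cyclic, Kulakoff applied to each $M_i$ (a non-cyclic group of order $p^{a-1}$, with $a-1 \ge 2$) forces the $p+1$ subgroups of $G$ of order $p^{a-2}$ to lie in $\Phi(G)$, which has order exactly $p^{a-2}$ and so contains only one such subgroup---a contradiction. The invocation of the classical classification of odd $p$-groups with a cyclic maximal subgroup then pins down $\Z_{p^{a-1}} \times \Z_p$ and $M_{p^a}$, and both are well known to have isomorphic subgroup lattices with the stated count. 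This is, incidentally, essentially the line of argument in \cite{AM} as well, so your proof is not only correct but in the spirit of the cited source.
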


\begin{thm}[Theorem B from \cite{AM}]\label{thm:p-group bound}
Let $G$ be a non-cyclic group with $|G|=2^a$.  If $a=3$, then $|\Sub G| \geq 6$ with equality if and only if $G \cong Q_8$.  And if $a\geq 4$, then $|\Sub G| \geq 3a-1$ with equality if and only if $G\cong Q_{16}$ or $G \cong \Z_{2^{a-1}} \times \Z_2$, or $M_{2^{a}} = \langle x,y \mid x^{2^{a-1}}=y^2=e, yxy^{-1} = x^{1+2^{a-2}}\rangle$.
\end{thm}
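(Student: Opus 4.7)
The plan is to induct on $a$, with base cases $a=3$ and $a=4$ verified by direct enumeration. For $a=3$, the four non-cyclic groups of order $8$ are $Q_8$, $\Z_4 \times \Z_2$, $D_8$, and $\Z_2^3$, with $6$, $8$, $10$, and $16$ subgroups respectively (the last via Proposition~\ref{prop: 3ZP}), so $Q_8$ is the unique minimum. For $a=4$, a check of the thirteen non-cyclic groups of order $16$ confirms that $Q_{16}$, $\Z_8 \times \Z_2$, and $M_{16}$ each have exactly $11 = 3(4)-1$ subgroups, and that no other has fewer.

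For the inductive step, assume $a \geq 5$ and split on $d(G) = \dim_{\F_2}(G/\Phi(G))$. When $d(G) \geq 3$, the quotient $G/\Phi(G) \cong \F_2^{d(G)}$ has at least $16$ subgroups by Proposition~\ref{prop: 3ZP} (and many more as $d(G)$ grows), each pulling back to a distinct subgroup of $G$ containing $\Phi(G)$; together with the proper subgroups of $\Phi(G)$ and the inductive hypothesis applied to $\Phi(G)$ when it is non-cyclic, a short case check yields $|\Sub G| > 3a - 1$.

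The central case is $d(G) = 2$: here $G$ has exactly three maximal subgroups $M_1, M_2, M_3$, any two of which meet in $\Phi(G)$ of order $2^{a-2}$. Every proper subgroup $H < G$ lies in at least one $M_i$, and lies in more than one (equivalently, in all three) if and only if $H \leq \Phi(G)$. A double count then yields the key identity
\[
|\Sub G| \,=\, |\Sub M_1| + |\Sub M_2| + |\Sub M_3| - 2|\Sub \Phi(G)| + 1.
\]
The inductive hypothesis supplies $|\Sub M_i| \geq a$ if $M_i$ is cyclic and $|\Sub M_i| \geq 3a - 4$ otherwise, and any $2$-group of order $2^{a-2}$ has at least $a-1$ subgroups (minimum at the cyclic group). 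Since any cyclic $M_i$ forces $\Phi(G) \leq M_i$ to be cyclic as well, a short case analysis on the number $c$ of cyclic maximals shows $|\Sub G| \geq 3a-1$ throughout, with equality forcing $c = 2$, $\Phi(G) \cong \Z_{2^{a-2}}$, and the unique non-cyclic maximal attaining inductive equality.

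The main obstacle is the classification step. Equality implies $G$ contains a cyclic subgroup of index $2$, so the classical classification of such $2$-groups (cyclic, $\Z_{2^{a-1}} \times \Z_2$, $D_{2^a}$, $Q_{2^a}$, $SD_{2^a}$, or $M_{2^a}$) applies. Among these, only $\Z_{2^{a-1}} \times \Z_2$ and $M_{2^a}$ admit two distinct cyclic maximals (for $a \geq 4$), while $D_{2^a}$, $Q_{2^a}$, and $SD_{2^a}$ admit only one. A direct verification that both $\Z_{2^{a-1}} \times \Z_2$ and $M_{2^a}$ realize $|\Sub G| = 3a-1$ then closes the induction; the group $Q_{16}$ appears in the equality list only as a base-case artifact at $a=4$ and does not reappear for $a \geq 5$.
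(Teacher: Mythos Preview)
The paper does not supply its own proof of this statement; it is quoted verbatim as Theorem~B of Aivazidis--M\"uller \cite{AM}. So there is no argument in the paper to compare against, and your plan must stand on its own.

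Your treatment of the case $d(G)=2$ is the heart of the matter and is essentially correct. The inclusion--exclusion identity
\[
|\Sub G| = |\Sub M_1| + |\Sub M_2| + |\Sub M_3| - 2|\Sub \Phi(G)| + 1
\]
is valid, and the case split on the number $c$ of cyclic maximals goes through once one notes that $c=3$ cannot occur (a non-cyclic $2$-group with a cyclic index-$2$ subgroup is on the classical list, and none of those have three cyclic maximals for $a\ge 3$), and that in the $c=0$ sub-case one must treat ``$\Phi(G)$ cyclic'' and ``$\Phi(G)$ non-cyclic'' separately, combining the inductive bound $|\Sub M_i|\ge 3a-4$ in the first instance with the cruder $|\Sub M_i|\ge |\Sub\Phi(G)|+3$ in the second. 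Your classification of the equality case via the index-$2$-cyclic list is also fine.

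There is, however, a genuine gap in your $d(G)\ge 3$ paragraph. You count only the subgroups containing $\Phi(G)$ (at least $16$ when $d=3$) together with the proper subgroups of $\Phi(G)$. When $d(G)=3$ and $\Phi(G)$ is cyclic --- which does occur, e.g.\ for $G=\Z_{2^{a-2}}\times\Z_2\times\Z_2$ --- this yields only $16+(a-3)=a+13$, and $a+13<3a-1$ as soon as $a\ge 8$ (and gives only equality at $a=7$, not the strict inequality you need). The ``short case check'' you invoke cannot close this with the ingredients listed. A clean repair is to abandon $\Phi(G)$ here and instead pick one maximal subgroup $M$: since $M/\Phi(G)\cong\F_2^{d-1}$ has dimension $\ge 2$, $M$ is non-cyclic, so induction gives $|\Sub M|\ge 3(a-1)-1=3a-4$; adding $G$ and the at least $2^{d}-2\ge 6$ other maximal subgroups (none of which lies in $M$) already yields $|\Sub G|\ge 3a+3>3a-1$.
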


Classifying abelian groups with exactly $k$ subgroups is now a matter of finding all possible ways to combine abelian $p$-groups for different primes so that the product of their individual numbers of subgroups equals $k$.  Recall also that, thanks to Theorem~\ref{thm:GxHsubgroups}, an abelian group can only have a prime number of subgroups if it is a $p$-group.

As an example to demonstrate, suppose we wish to find all abelian groups with exactly 10 subgroups. We must consider abelian $p$-groups with exactly 10 subgroups themselves, or a product of an abelian $p$-group with an abelian $q$-group such that one has 5 subgroups and the other has 2 subgroups.  Applying Theorem~\ref{prop: ZxZ class}, we find that the only abelian groups (up to similarity) with 10 subgroups are $\Z_{p^9}$, $\Z_{p^4q}$, $\Z_2 \times \Z_2 \times \Z_p$ ($p \neq 2$), $\Z_7 \times \Z_7$, and $\Z_9 \times \Z_3$.  Continuing in this manner, Table~\ref{tab:Abelian} reports all similarity classes of abelian groups with fewer than 23 subgroups.  Note that, just as in the case of $\Z_2 \times \Z_2 \times \Z_p$, arbitrary primes are always assumed to be relatively prime to any others appearing.

%\newpage
\begin{table}[h!]
    \centering
    \begin{tabular}{c|c|c}
         $|\Sub G|$ & Similarity Classes of Groups & \# of Classes \\ \hline
         $1$&$\{e\}$ & 1   \\  \hline
        $2$ & $\Z_p$ & 1 \\  \hline
         $3$& $\Z_{p^2}$ & 1  \\ \hline
        $4$ & $\Z_{p^3}$, $\Z_{pq}$ & 2   \\  \hline
         $5$& $\Z_{p^4}$, $\Z_2 \times \Z_2$ & 2  \\  \hline
        $6$ & $\Z_{p^5}$, $\Z_{p^2q}$, $\Z_3 \times \Z_3$ & 3 \\  \hline
        $7$& $\Z_{p^6}$ & 1   \\  \hline
        $8$ & $\Z_{p^7}$, $\Z_{p^3q}$, $\Z_{pqr}$, $\Z_4 \times \Z_2$, $\Z_5 \times \Z_5$ & 5  \\  \hline
         $9$& $\Z_{p^8}$, $\Z_{p^2q^2}$ & 2   \\ \hline
        $10$ & $\Z_{p^9}$, $\Z_{p^4q}$, $\Z_2 \times \Z_2 \times \Z_p$, $\Z_9 \times \Z_3$, $\Z_7 \times \Z_7$ & 5  \\  \hline
         $11$& $\Z_{p^{10}}$, $\Z_8 \times \Z_2$ & 2  \\  \hline
        $12$ & $\Z_{p^{11}}$, $\Z_{p^5q}$, $\Z_{p^3q^2}$, $\Z_{p^2qr}$, $\Z_3 \times \Z_3 \times \Z_p$ & 5  \\  \hline
         $13$ & $\Z_{p^{12}}$ & 1  \\  \hline
         $14$& $\Z_{p^{13}}$, $\Z_{p^6q}$, $\Z_{16}\times \Z_2$, $\Z_{27} \times \Z_3$, $\Z_{25}\times \Z_5$, $\Z_{11}\times \Z_{11}$ & 6   \\  \hline
        $15$ &  $\Z_{p^{14}}$, $\Z_{p^4q^2}$, $\Z_4 \times \Z_4$, $\Z_2 \times \Z_2 \times \Z_{p^2}$ & 4  \\  \hline
        & $\Z_{p^{15}}$, $\Z_{p^7q}$, $\Z_{p^3q^3}$, $\Z_{p^3qr}$, $\Z_{pqrs}$, $\Z_2 \times \Z_2 \times \Z_2$, & \\
        16 & $\Z_4 \times \Z_2 \times \Z_p$, $\Z_5 \times \Z_5 \times \Z_p$,  $\Z_{13} \times \Z_{13}$ & 9  \\ \hline
        17 & $\Z_{p^{16}}$, $\Z_{32} \times \Z_2$ & 2 \\ \hline
        18 & $\Z_{p^{17}}$, $\Z_{p^8q}$, $\Z_{p^5q^2}$, $\Z_{p^2q^2r}$, $\Z_{81} \times \Z_3$, $\Z_3 \times \Z_3 \times \Z_{p^2}$, $\Z_{49} \times \Z_7$ & 7 \\ \hline
        19 & $\Z_{p^{18}}$ & 1 \\ \hline
        & $\Z_{p^{19}}$, $\Z_{p^9q}$, $\Z_{p^4q^3}$, $\Z_{p^4qr}$, $\Z_{64} \times \Z_2$, $\Z_2 \times \Z_2 \times \Z_{p^3}$, $\Z_2 \times \Z_2 \times \Z_{pq}$ &\\
        20 &  $\Z_9 \times \Z_3 \times \Z_p$, $\Z_{125} \times \Z_5$, $\Z_7 \times \Z_7 \times \Z_p$, $\Z_{17} \times \Z_{17}$ & 11  \\ \hline
        21 & $\Z_{p^{20}}$, $\Z_{p^6q^2}$ & 2 \\ \hline
        22 & $\Z_{p^{21}}$, $\Z_{p^{10}q}$, $\Z_8 \times \Z_4$, $\Z_{8} \times \Z_2 \times \Z_p$, $\Z_{243} \times \Z_3$, $\Z_{19} \times \Z_{19}$ & 6
    \end{tabular}
    \caption{Similarity classes of abelian groups with fewer than 23 subgroups.}
    \label{tab:Abelian}
\end{table}

One could continue to extend this process quite a bit further without running into too much resistance.\footnote{In fact, at the request of OEIS we have \cite{BN}.}  The much more interesting and challenging case lies in discussion of non-abelian groups.

%%%%%%%%%%%%%%%%%%%%%%%%%%%%%%%%%%%%%%%%%%%%%%%%%%%%%%%%%%%%%%%%%%%%%
\section{Non-abelian groups} \label{sec:nonabelian}
%%%%%%%%%%%%%%%%%%%%%%%%%%%%%%%%%%%%%%%%%%%%%%%%%%%%%%%%%%%%%%%%%%%%%
For non-abelian groups, it would be nice if we could apply the same sorts of techniques to count the number of subgroups by understanding smaller components.  The best analog available for decomposing a group into relatively prime $p$-group parts is the collection of Sylow Theorems.

\begin{namedthm}{The Sylow Theorems}\label{SYL-THM}
 Let $G$ be a finite group, $p$ a prime divisor of $|G|$ and write $|G| = p^at$ where $a$ and $t$ are positive integers, and $p$ does not divide $t$.  Let $\operatorname{Syl}_p(G) = \{\Syl{P} \leq G \mid |\Syl{P}|=p^a\}$.
\begin{enumerate}[I]
    \item{There exists a \emph{Sylow $p$-subgroup} $\Syl{P} \in \operatorname{Syl}_p(G)$.}
    \item{If $\Syl{P}, \Syl{P'} \in \operatorname{Syl}_p(G)$, then there exists a $g \in G$ with $\Syl{P'} = g\Syl{P}g^{-1}$.}
    \item{Let $n_p = |\operatorname{Syl}_p(G)|$. Then $n_p\mid t$ and $n_p = [G:N(\Syl{P})] \equiv 1 \pmod{p}$, where $N(\Syl{P})$ is the normalizer.}
\end{enumerate}
\end{namedthm}

These famous results give us some information regarding the number of $p$-subgroups within a group $G$, but they do not directly tell us about how the different $p$-group components will interact with one another.  If $G$ is especially nice -- i.e.\ if each of its Sylow subgroups is unique and normal -- then $G$ will decompose as a direct product of its Sylow subgroups (see e.g.\ Corollary 5.4.2 in \cite{Nash}) and we may apply Theorem~\ref{thm:GxHsubgroups} to count the number of subgroups directly.  Unfortunately, this is frequently not the case when $G$ is non-abelian.  In addition, the Sylow $p$-subgroups themselves need not be abelian, thus we need a way to explore the subgroups of non-abelian $p$-groups as well.

%%%%%%%%%%%%%%%%%%%%%%%%%%%%%%%%%%%%%%%%%%%%%%%%
%\subsection{Non-abelian $p$-groups}\label{subsec:nonabelian}
%%%%%%%%%%%%%%%%%%%%%%%%%%%%%%%%%%%%%%%%%%%%%%%%
As the Sylow Theorems lend themselves to breaking a group into $p$-group components, they do not give us much information in the case when $G$ is itself a $p$-group (in which case $G = \Syl{P}$ and $n_p=1$).  Thankfully there is a generalization of Sylow (III) due to Wielandt \cite{Wielandt} which places conditions on the number of $p$-subgroups for each power of $p$.

\begin{thm}[From \cite{Wielandt}\footnote{For a more modern treatment in English, see \url{https://people.bath.ac.uk/dmjc20/GpThy/wiel.pdf}.}] \label{thm: Weilandt} 
Let $G$ be a group with $|G|=p^a$ for a prime $p$. Then the number of subgroups of order $p^i$ $(i \leq a)$ is equivalent to $1 \pmod{p}$.
\end{thm}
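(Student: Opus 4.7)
The plan is to use a counting argument via the action of $G$ on subsets of itself by left multiplication. Write $s_i$ for the number of subgroups of $G$ of order $p^i$, and let $\Omega = \{X \subseteq G : |X| = p^i\}$, so that $|\Omega| = \binom{p^a}{p^i}$. Let $G$ act on $\Omega$ by left multiplication; the goal is to read off information about $s_i$ from the orbit decomposition of this action.

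For $X \in \Omega$, let $H = \Stab_G(X)$. Since $hX = X$ for all $h \in H$, the set $X$ is a union of right cosets of $H$, which forces $|H|$ to divide $|X| = p^i$. Thus $|H| = p^j$ for some $j \leq i$, and the corresponding orbit has size $p^{a-j}$. The key observation is that the case $j=i$ is exactly the case where $X$ is a single right coset $Hx$, and for each subgroup $H$ of order $p^i$ and each $x \in G$ one checks directly that $Hx$ has stabilizer precisely equal to $H$. This gives a bijection between the set of orbits whose stabilizers have order $p^i$ and the set of subgroups of order $p^i$, with each such orbit containing exactly $[G:H] = p^{a-i}$ right cosets. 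Distinct subgroups yield distinct orbits because $H$ can be recovered from any one of its right cosets. All remaining orbits have stabilizer of order $p^j$ with $j < i$, hence size $p^{a-j}$ divisible by $p^{a-i+1}$.

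Summing orbit sizes then produces the congruence
\[ \binom{p^a}{p^i} \equiv s_i \cdot p^{a-i} \pmod{p^{a-i+1}}. \]
Rather than attack the left-hand side by a direct Kummer- or Lucas-type calculation, I would conclude with a slick reduction: since this congruence holds for \emph{every} group of order $p^a$, applying it to $\Z_{p^a}$, which has exactly one subgroup of each order $p^i$, gives $\binom{p^a}{p^i} \equiv p^{a-i} \pmod{p^{a-i+1}}$. Comparing the two congruences and cancelling the factor of $p^{a-i}$ forces $s_i \equiv 1 \pmod p$.

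The main obstacle I anticipate is the orbit bookkeeping: verifying carefully that the stabilizer of $Hx$ is \emph{exactly} $H$ (not merely containing it), and confirming that the map from subgroups of order $p^i$ to orbits is injective. Once this is set up correctly, the divisibility bound on the remaining orbit sizes is automatic, and the cyclic-group trick bypasses any need to evaluate the central binomial coefficient modulo a prime power.
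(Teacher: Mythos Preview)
Your argument is correct and is in fact Wielandt's own proof, which is precisely what the paper cites; the paper does not supply an independent proof of this statement, only the reference and a pointer to a modern write-up. One small wording issue worth tightening: under the left-multiplication action the orbit of a subgroup $H$ consists of its \emph{left} cosets $gH$, so the phrase ``each such orbit containing exactly $[G:H]=p^{a-i}$ right cosets'' is slightly misleading. The counting still goes through cleanly because the sets $X$ with $|\Stab(X)|=p^i$ are exactly the right cosets of the $s_i$ subgroups of order $p^i$, and there are $s_i\,p^{a-i}$ of them (the stabilizer recovers $H$ from $Hx$, so distinct pairs $(H,Hx)$ give distinct sets); that these sets fall into complete orbits of size $p^{a-i}$ then yields the congruence. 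Your use of the cyclic group to read off $\binom{p^a}{p^i}\equiv p^{a-i}\pmod{p^{a-i+1}}$ without a direct Kummer/Lucas computation is the standard elegant finish.
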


In addition, Theorem~\ref{cor: 3+-group bound} and Theorem~\ref{thm:p-group bound} also provide lower bounds on the number of subgroups of non-abelian $p$-groups.  With these in hand, we move to non-abelian groups whose orders are divisible by multiple primes.

%%%%%%%%%%%%%%%%%%%%%%%%%%%%%%%%%%%%%%%%%
\subsection{Non-abelian groups with $|G|$ divisible by multiple primes}
%%%%%%%%%%%%%%%%%%%%%%%%%%%%%%%%%%%%%%%%
When $|G|$ is divisible by multiple primes, the Sylow Theorems allow us to explore the $p$-group components for each prime $p$ in the prime factorization of $|G|$, however it is unclear exactly how those components will interact with one another.  In the nicest situation, when $G$ is nilpotent, then $G$ can be written as a direct product of its Sylow subgroups\footnote{Note that this situation exactly corresponds to $G$ having only \emph{normal} Sylow subgroups.} and we may apply Theorem~\ref{thm:GxHsubgroups} to directly count the number of subgroups.  Slightly more generally, whenever $G$ can be expressed as a direct product of subgroups with coprime orders then this avenue will be available to us -- i.e.\ we can find all such groups with exactly $k$ subgroups by exploring the ways to factor $k$ (having already understood groups with fewer than $k$ subgroups).

When this is not the case, we need a different way to count subgroups.  The Sylow Theorems and Theorem~\ref{thm: Weilandt} provide some information about the number of $p$-subgroups within $G$, but we need to be able to count subgroups of composite orders as well.  Thankfully (see e.g.\ Proposition~4.2.11 in \cite{Nash}), whenever $N \unlhd G$ and $H \leq G$, then the set product  $NH \leq G$ and in fact, $NH \unlhd G$ if $H \unlhd G$ too.  Moreover, if $N$ and $H$ have relatively prime orders, then we must have $N \cap H = \{e\}$ and it follows that $|NH| = |N|\cdot|H|$.

This will allow us to use normal $p$-subgroups, together with $q$-subgroups, to create subgroups of composite orders.  To demonstrate the effectiveness of this idea through an example, we need to set up some notation.  Given a fixed group $G$, in what follows we will use $H_n$ for $n \in \N$ to denote a subgroup of order $n$ in $G$. % and we will use $\Syl{P}$, $\Syl{Q}$, $\Syl{R}$, $\Syl{S}$ to denote Sylow $p$, $q$, $r$, and $s$-subgroups respectively.  
Now suppose that $p^a$ and $q^b$ are highest powers of primes $p \neq q$ which divide $|G|$.  There must exist at least one subgroup $H_{p^i} \leq G$ for each $1 \leq i \leq a$ and similarly, we have at least one $H_{q^j} \leq G$ for each $0 \leq j \leq b$ (this second collection includes the trivial subgroup $\{e\}$)
.  Now, for each such prime power $p^i$, observe that Theorem~\ref{thm: Weilandt} implies that either $H_{p^i}$ is unique -- in which case we can create at least $b+1$ distinct product subgroups $H_{p^i}H_{q^j}$ for each $0 \leq j \leq b$ (including $H_{p^i}$ itself) -- or $H_{p^i}$ is not unique, in which case there must be at least $p+1$ subgroups of order $p^i$.  Running through the $a$ different prime powers, we have demonstrated the existence of at least $a\cdot \min(b+1,p+1)$ distinct subgroups in $G$ (possibly including $G$ itself).  Note that, in some situations it will be helpful to treat the Sylow subgroups themselves separately (i.e.\ not allowing $i=a$ or $j=b$).

In what follows, we consider different cases based on the number of distinct primes which divide $|G|$.  In each case, we demonstrate a lower bound on the number of subgroups of non-nilpotent $G$ as a function of the primes and exponents in the prime factorization of $|G|$ thereby reducing the search space to a small finite number of cases.

Beginning with $|G|$ being divisible by only two primes $p<q$, it is helpful to recall (see e.g.\ \cite{Nash}) that there exists a non-nilpotent group $G$ of order $pq$ if and only if $q \equiv 1 \pmod{p}$.  Moreover, since subgroups of index $p$, where $p$ is the smallest prime dividing $|G|$ must be normal (see Theorem 1 in \cite{Lam}), it follows that $\Syl{Q}$ must be normal and thus Sylow (III) implies that $|\Sub G| = q+3$ for such a group.  %The right generalization of this result is the case when both Sylow subgroups are cyclic and one is normal.  First we 
For the more general situation, we recall a classic result of Burnside:

\begin{lem}[From \cite{Burnside}]\label{lem:Burnside}
Let $G$ be a group and let $p$ be the smallest prime dividing $|G|$.  If $\Syl{P} \in \operatorname{Syl}_p(G)$ is cyclic, then $\Syl{P}$ has a normal complement.
\end{lem}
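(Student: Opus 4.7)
The plan is to deduce this statement from Burnside's transfer theorem, which guarantees a normal $p$-complement for any abelian Sylow $p$-subgroup $\Syl{P}$ satisfying $\Syl{P} \leq Z(N_G(\Syl{P}))$. The cyclic-and-smallest-prime hypotheses will be used precisely to establish this centralizer condition, after which the transfer homomorphism produces the complement.

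First, I would verify that $N_G(\Syl{P}) = C_G(\Syl{P})$. Since $\Syl{P}$ is cyclic, it is abelian, so $\Syl{P} \leq C_G(\Syl{P}) \leq N_G(\Syl{P})$. The $N/C$ theorem provides an embedding $N_G(\Syl{P})/C_G(\Syl{P}) \hookrightarrow \Aut(\Syl{P})$, and since $\Syl{P}$ is cyclic of order $p^a$ we have $|\Aut(\Syl{P})| = p^{a-1}(p-1)$. Because $\Syl{P}$ itself lies inside $C_G(\Syl{P})$ and is a full Sylow $p$-subgroup of $G$, the index $[N_G(\Syl{P}) : C_G(\Syl{P})]$ is coprime to $p$, and so must divide $p-1$. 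But this index also divides $|G|$, meaning every prime dividing it is at least $p$, the smallest prime divisor of $|G|$. No prime $\geq p$ divides $p-1$, which forces $N_G(\Syl{P}) = C_G(\Syl{P})$, equivalently $\Syl{P} \leq Z(N_G(\Syl{P}))$.

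Second, I would apply the transfer homomorphism $V : G \to \Syl{P}$, which is well defined because $\Syl{P}$ is abelian. The key computation is that $V(x) = x^{[G:\Syl{P}]}$ for each $x \in \Syl{P}$; this follows from Burnside's fusion lemma (any two elements of an abelian Sylow $p$-subgroup that are $G$-conjugate are already $N_G(\Syl{P})$-conjugate, and under $N_G(\Syl{P}) = C_G(\Syl{P})$ such elements must in fact be equal). Since $\gcd([G:\Syl{P}], |\Syl{P}|) = 1$, the map $x \mapsto x^{[G:\Syl{P}]}$ is an automorphism of $\Syl{P}$, so $V$ is surjective; its kernel is then a normal subgroup $K \unlhd G$ with $K \cap \Syl{P} = \{e\}$ and $K\Syl{P} = G$, which is precisely a normal complement to $\Syl{P}$. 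The main obstacle is the transfer identity $V(x) = x^{[G:\Syl{P}]}$, which is not obvious and rests essentially on both the abelianness of $\Syl{P}$ and the equality $N_G(\Syl{P}) = C_G(\Syl{P})$ established in the previous step; an alternative path would be an induction on $|G|$ packaged into the classical proof of Burnside's normal $p$-complement theorem, but the transfer argument is the most direct route.
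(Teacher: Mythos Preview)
Your argument is correct and is exactly the classical proof of Burnside's normal $p$-complement theorem: the $N/C$ theorem combined with the smallest-prime hypothesis forces $N_G(\Syl{P}) = C_G(\Syl{P})$, and then the transfer map yields the normal complement. The paper does not actually prove this lemma at all; it is simply quoted as a result of Burnside and used as a black box, so there is no alternative approach in the paper to compare against --- your write-up supplies precisely the proof the citation is pointing to.
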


It follows immediately that if $|G|$ is divisible by exactly two primes $p<q$ and $\Syl{P} \in \operatorname{Syl}_p(G)$ is cyclic, then $n_q=1$ as the complement of $\Syl{P}$ is a Sylow $q$-subgroup. In addition, it is well-known that $G$ is nilpotent if and only if every maximal subgroup of $G$ is normal  Thus, if $G$ is non-nilpotent it follows that $n_p \neq 1$ and $G$ must contain at least one non-normal maximal subgroup.  When $G$ has cyclic Sylow subgroups though, the maximal subgroups of $G$ have prime index.  

Indeed, let $|G|=p^aq^b$ with $\langle x \rangle = \Syl{P} \in \operatorname{Syl}_p(G)$ and $\langle y \rangle = \Syl{Q} \in \operatorname{Syl}_q(G)$ both cyclic and let $H \leq G$ be a subgroup with $|H|=p^iq^j$ for $i<a$ and $j<b$.  Since $\Syl{Q} \unlhd G$, then $H\Syl{Q}$ is a proper subgroup which contains $H$ and has order $p^iq^b$, hence $H$ cannot be maximal.  Similarly, if $|H|=p^aq^j$ for $j<b-1$, then $H = \Syl{P}\langle y^{q^{b-j}}\rangle$ for some Sylow $p$-subgroup.  It follows that the product of $H$ with the unique normal subgroup $<y^q>$ of order $q^{b-1}$ will create a proper subgroup containing $H$ -- again implying that $H$ cannot be normal.  Thus, maximal subgroups of $G$ must have order $p^{a-1}q^b$ or $p^aq^{b-1}$.

Since subgroups of index $p$ must be normal, it follows that if $G$ is non-nilpotent then it must contain a non-normal subgroup of order $p^aq^{b-1}$.  Hence, it contains exactly $q$ such subgroups.  Moreover, applying the Orbit-Stabilizer Theorem, this implies that $G$ must also contain (at least $q$) non-normal subgroups of order $p^aq^j$ for each $1 \leq j < b$.  With these observations, we are now ready to describe bounds on $|\Sub G|$ when $|G|$ is divisible by exactly two distinct primes.

\begin{thm}\label{thm:p^aq^b}
Let $G$ be non-nilpotent with $|G|=p^aq^b$ for primes $p<q$ %and $a \geq 2$
, then
%$$|\Sub G| \geq q+1 + \max\{2b + (a-1)\min(b+1,p+1),~ 2a+(b-1)\min(a+1,q+1)\}.$$
%$$|\Sub G| \geq a+b+q+\max\{b+(a-1)\min(b,p),~ a + (b-1)\min(a,q)\}.$$
$$|\Sub G| \geq \min\{bq+ab+a+1,~ b(q+1) + 2a + (a-1)\min(p,b),~ b+q+1+(b-1)\min(a,q) + m_p\},$$
where we only consider the second term if $b>1$, and we only consider the third term if $a>1$, in which case $m_2 = 5$ if $a=2$, $m_2=6$ if $a=3$, and $m_p = (a-1)(p+1)+2$ if $p \neq 2$ or $a \geq 4$.
\end{thm}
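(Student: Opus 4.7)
My plan is to split the analysis into subcases governed by the structure of the Sylow subgroups of $G$, and show that each subcase forces at least one of the three terms in the minimum to be a valid lower bound for $|\Sub G|$. Since $G$ is non-nilpotent with $|G|=p^aq^b$, at least one of $\Syl{P}$ or $\Syl{Q}$ fails to be normal, and Lemma~\ref{lem:Burnside} connects normality of $\Syl{Q}$ to $\Syl{P}$ being cyclic. Throughout, I will organize subgroups of $G$ by order, use Sylow (III), Theorem~\ref{thm: Weilandt}, and the Orbit--Stabilizer Theorem to extract multiplicities from non-normality, and exploit normality of a Sylow subgroup to form product subgroups $H_{p^i}H_{q^j}$ of composite order $p^iq^j$.

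The first subcase treats both Sylow subgroups cyclic. Lemma~\ref{lem:Burnside} applied with the smaller prime $p$ forces $\Syl{Q}$ to be normal, so $n_q=1$ and there are exactly $b$ nontrivial subgroups of $q$-power order, all normal in $G$. Non-nilpotency forces $n_p>1$, and Sylow (III) combined with $n_p\mid q^b$ gives $n_p\geq q$. The discussion preceding the theorem shows that maximal subgroups of $G$ have prime index, that non-normal maximals have order $p^aq^{b-1}$ with exactly $q$ conjugates, and more generally that each order $p^aq^j$ for $1\leq j<b$ carries at least $q$ non-normal subgroups (each obtained as $\Syl{P}\cdot H_{q^j}$ for distinct choices of $\Syl{P}$). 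Summing $\{e\}$, the $a-1$ subgroups of intermediate $p$-power order, the $q$ Sylow $p$-subgroups, the $b$ subgroups of $\Syl{Q}$, the $(a-1)b$ products $H_{p^i}H_{q^j}$ for $1\leq i<a$ and $1\leq j\leq b$, the $q(b-1)$ non-normal subgroups at orders $p^aq^j$ for $1\leq j<b$, and $G$ itself, the total telescopes to the first bound $bq+ab+a+1$.

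The second subcase treats $\Syl{Q}$ non-cyclic, which requires $b\geq2$. Since $q>p\geq2$, we have $q\geq3$, and Theorem~\ref{cor: 3+-group bound} gives at least $(b-1)(q+1)+2$ subgroups inside $\Syl{Q}\leq G$; multiple Sylow $q$-subgroups, if present, contribute further. Combining this with the Wielandt-type case analysis from the preamble---for each prime power $p^i$, either $H_{p^i}$ is unique (giving $b+1$ distinct product subgroups $H_{p^i}H_{q^j}$) or $H_{p^i}$ is non-unique (giving at least $p+1$ subgroups of order $p^i$)---the tally reaches $b(q+1)+2a+(a-1)\min(p,b)$. The third subcase treats $\Syl{P}$ non-cyclic, forcing $a\geq2$: Theorems~\ref{cor: 3+-group bound} and~\ref{thm:p-group bound} yield $|\Sub \Syl{P}|\geq m_p$ within any Sylow $p$-subgroup, and further contributions come from the $b$ subgroups of $q$-power order, the Orbit--Stabilizer boost when $n_q>1$ (since $n_q$ is then a nontrivial power of $p$ congruent to $1$ mod $q$, forcing $n_q\geq q+1$), and the mixed products $H_{p^i}H_{q^j}$ at intermediate orders, delivering the third bound.

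The principal obstacle is the intricate bookkeeping in each subcase. I must identify precisely which products $H_{p^i}H_{q^j}$ form subgroups (normality of at least one factor combined with coprime orders is essential, and then $|H_{p^i}H_{q^j}|=p^iq^j$), apply Orbit--Stabilizer carefully to count conjugates of non-normal subgroups, and verify pairwise distinctness so that the tallies add without overlap. The cyclic--cyclic subcase is especially delicate: the ``$bq$'' contribution rests on showing that non-normal subgroups of each order $p^aq^j$ exist and form orbits of size exactly $q$ under conjugation, and one must confirm that these orbits are disjoint from the products $H_{p^i}H_{q^j}$ already enumerated. I expect this bookkeeping, rather than any new structural insight, to be the main technical hurdle.
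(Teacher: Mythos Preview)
Your proposal is correct and follows essentially the same approach as the paper: the paper likewise splits into the case where $\Syl{P}$ is cyclic (so Burnside forces $\Syl{Q}\unlhd G$ and $n_p\geq q$), sub-splitting on whether $\Syl{Q}$ is cyclic to obtain the first two bounds, and then handles $\Syl{P}$ non-cyclic via Theorems~\ref{cor: 3+-group bound}--\ref{thm:p-group bound} together with the Wielandt dichotomy on $q$-subgroups to obtain the third. The only cosmetic difference is that the paper's top-level split (on $\Syl{P}$ cyclic/non-cyclic) is a disjoint partition, whereas your three subcases overlap when both Sylows are non-cyclic; this is harmless for a $\min$ bound, but you should make explicit in your subcase~2 that you may assume $\Syl{P}$ is cyclic (else subcase~3 already applies), since that is what guarantees $n_p\geq q$ and $\Syl{Q}\unlhd G$ and hence the full tally $b(q+1)+2a+(a-1)\min(p,b)$.
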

\begin{proof}
We consider two main cases based on whether $\Syl{P} \in \operatorname{Syl}_p(G)$ is cyclic or not.

(1) If $\Syl{P}$ is cyclic, then $\Syl{P} {\not}{\unlhd} G$ and $\Syl{Q} \in \operatorname{Syl}_q(G)$ is normal as we saw above.  (i) If $\Syl{Q} \in \operatorname{Syl}_q(G)$ is cyclic too, then by the previous discussion we have at least $bq$ subgroups of orders $p^aq^j$ ($0 \leq j < b$) -- including the at least $q$ Sylow $p$-subgroups.  Moreover, every $q$-subgroup is unique and normal, thus we also have at least $ab$ subgroups of orders $p^iq^j$ ($0 \leq i < a$, $1 \leq j \leq b$).  Finally, we also have at least $a-1$ distinct $p$-subgroups of lower orders.  Together with $G$ and $\{e\}$, this shows $|\Sub G| \geq bq + ab + a + 1$.

(ii) If instead, $\Syl{Q}$ is not cyclic, then $b>1$ and by Theorem~\ref{cor: 3+-group bound}, $\Syl{Q}$ must have at least $(b-1)(q+1)+2$ subgroups (including $\{e\}$).  Since $\Syl{Q} \unlhd G$, we still have at least $a$ subgroups of orders $p^iq^b$ ($1 \leq i \leq a$) including $G$ itself.  In addition to the at least $q$ Sylow $p$-subgroups, we must also have at least $(a-1)\min(p+1,b+1)$ subgroups of orders $p^iq^j$ ($1 \leq i < a$, $0 \leq j < b$).  All together, this shows $|\Sub G| \geq (b-1)(q+1)+2 + a + q + (a-1)\min(p+1,b+1) = b(q+1)+2a+(a-1)\min(p,b)$.

(2) If $\Syl{P}$ is not cyclic, then $a>1$ and we may apply Theorem~\ref{thm:p-group bound} or Theorem~\ref{prop: ZxZ class} (subtracting 1) to count the \emph{proper} subgroups of $\Syl{P}$.  Since $G$ is non-nilpotent, we must have at least one non-normal Sylow subgroup.  Observe that if $\Syl{Q} \unlhd G$, then there must be at least $q+1$ total Sylow subgroups, together with at least one product $H_{pq^b}$ (since $a>1$) -- and if $\Syl{Q} {\not}{\unlhd} G$, then there must be at least $q+2$ Sylow subgroups total.  Moreover, for each of the $q$-subgroups of lower order, either they are unique and we may create product subgroups with subgroups of $\Syl{P}$, or there are at least $q+1$ of them by Theorem~\ref{thm: Weilandt}.  It follows that there must be at least $(b-1)\min(a+1,q+1)$ subgroups of orders $p^iq^j$ ($0 \leq i \leq a$, $1 \leq j < b$).  Counting $G$ makes up for the 1 we subtracted from the subgroups of $\Syl{P}$, thus we have demonstrated that $|\Sub G| \geq q+2 + (b-1)\min(a+1,q+1) + m_p = b+q+1+(b-1)\min(a,q) + m_p$, where $m_p$ is the number of subgroups of $\Syl{P}$ coming from Theorem~\ref{thm:p-group bound} or Theorem~\ref{prop: ZxZ class} as applicable.
\end{proof}

Note that the above bound is sharp when $b=1$. %and $q \equiv 1 \pmod{p}$.  
Next, consider the situation when $|G|$ is divisible by three primes $p<q<r$.  Recall that whenever $G$ can be decomposed as a non-trivial direct product of subgroups with coprime orders, then we can apply Theorem~\ref{thm:GxHsubgroups}, thus we again consider only the groups which cannot be decomposed in this way.  

\begin{thm}\label{thm:p^aq^br^c}
Let $G$ be a non-nilpotent group that cannot be decomposed as a non-trivial direct product of two groups with coprime orders.  If $|G|=p^aq^br^c$ % = p_1^{x_1}p_2^{x_2}p_3^{x_3}$ 
with $p<q<r$ %and $x_1 \geq x_2 \geq x_3$ 
then,
\begin{equation}\label{eq:p^aq^br^c}
\begin{split}
%|\Sub G| \geq x_1 + x_2 + x_3 + (x_1-1)\min(x_2+x_3,p) + (x_2-1)\min(x_3,p)+\\
|\Sub G| \geq a+b+c + (a-1)\min(b+1,p) + (b-1)\min(c+1,q) + (c-1)\min(a+1,r)+\\
\min\{p+q+r+2,~ q+2+\min(p^i+1,q^j,2r+1%r+p
),~ \min(r+1,2q+2%p+q+1
) + \min(r+1,2q)\},
\end{split}
\end{equation}
where $i$ and $j$ are minimal such that $p^i, q^j \geq r+1$.
%{\color{green}where $\{x_1, x_2, x_3\} = \{a,b,c\}$ and $x_1 \geq x_2 \geq x_3$.}
\end{thm}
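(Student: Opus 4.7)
The plan is to extend the proof strategy of Theorem~\ref{thm:p^aq^b} from two primes to three, still relying on three main tools: the Sylow theorems (which force either a unique/normal Sylow $s$-subgroup or $\ge s+1$ of them), Wielandt's theorem (Theorem~\ref{thm: Weilandt}, providing the same dichotomy for subgroups of any order $s^i$ inside $\Syl{S}$), and the product-subgroup construction, namely that $NH$ is a subgroup of order $|N|\cdot|H|$ whenever $N\unlhd G$ and $(|N|,|H|)=1$.

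To establish the unconditional piece $a+b+c+(a-1)\min(b+1,p)+(b-1)\min(c+1,q)+(c-1)\min(a+1,r)$, I would first exhibit $a+b+c$ prime-power subgroups by taking chains of subgroups inside each Sylow (Sylow~I applied internally to each $\Syl{P}$, $\Syl{Q}$, $\Syl{R}$). Then, for each of the $a-1$ subgroups $H_{p^i}$ of order $p^i$ with $1\le i\le a-1$, I would apply the Wielandt dichotomy: either $H_{p^i}$ is unique and hence normal, in which case the products $H_{p^i}\cdot H_{q^j}$ ($j=0,1,\ldots,b$) yield $b+1$ distinct subgroups of orders $p^iq^j$; or $H_{p^i}$ is not unique, in which case Wielandt yields $\ge p+1$ subgroups of order $p^i$. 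Taking the weaker of the two outcomes (after subtracting previously-counted subgroups as appropriate) gives $\min(b+1,p)$ per level, for a total of $(a-1)\min(b+1,p)$. Cycling the roles $(p,q)\to(q,r)\to(r,p)$ produces the other two cross-terms, with careful bookkeeping to ensure that the subgroups arising from the three cycles have distinct prime-power signatures and therefore do not overlap.

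The final $\min\{\ldots\}$ correction reflects the contribution of the Sylow subgroups themselves together with $\{e\}$ and $G$, under the various admissible normality patterns. Because $G$ is non-nilpotent and does not decompose as a direct product of two coprime-order subgroups, a Schur--Zassenhaus style argument (together with the observation that a normal Sylow on which the complement acts trivially would produce a direct decomposition) forces at least two of $\Syl{P},\Syl{Q},\Syl{R}$ to be non-normal. The three expressions inside the $\min$ correspond to three qualitatively different patterns: all three Sylows non-normal, giving $n_p\ge p+1$, $n_q\ge q+1$, $n_r\ge r+1$ and hence $p+q+r+2$ after adding $\{e\}$ and $G$; a configuration in which $n_r$ is sharply constrained by Sylow~III ($n_r$ divides $p^aq^b$ and is $\equiv 1\pmod r$), producing the bound $\min(p^i+1,q^j,2r+1)$ with $i,j$ minimal such that $p^i,q^j\ge r+1$, and combined with other Sylow contributions giving $q+2+\min(p^i+1,q^j,2r+1)$; and a third configuration in which two terms of the form $\min(r+1,2q+\varepsilon)$ emerge from the simultaneous Sylow~III constraints on a pair of non-normal Sylows. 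The minimum over these scenarios is a safe lower bound since at least one of them occurs in $G$.

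The main obstacle will be the final step: exhaustively enumerating the normality patterns of $(\Syl{P},\Syl{Q},\Syl{R})$ compatible with non-decomposability, and extracting the sharp Sylow~III bounds in each case---especially the somewhat subtle expression $\min(p^i+1,q^j,2r+1)$ arising from the admissible divisors of $p^aq^b$ that are $\equiv 1\pmod r$. A secondary difficulty is verifying that the subgroups counted by the $\min$-term are genuinely disjoint from the prime-power subgroups and composite-order products already tallied in the unconditional piece; this requires tracking the precise orders of every subgroup produced and exploiting the distinctness of prime-power signatures.
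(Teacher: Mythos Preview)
Your overall strategy matches the paper's: count lower prime-power subgroups via the Wielandt dichotomy, pair them cyclically with subgroups for another prime to produce composite-order products, and then handle the Sylow subgroups and their mutual products by a case analysis on normality. One minor point: to make the arithmetic land on the stated bound, the paper also pairs each normal $H_{p^i}$ with $\Syl{R}$ (so $b+2$ products per level, not $b+1$); without that extra product your unconditional piece comes up one short per level.

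There is, however, a genuine error in your case analysis for the $\min\{\ldots\}$ term. You claim that non-decomposability forces at least two of $\Syl{P},\Syl{Q},\Syl{R}$ to be non-normal, but this is false: $D_{42}$ (order $2\cdot3\cdot7$) is non-nilpotent and non-decomposable, yet both $\Syl{3}$ and $\Syl{7}$ are normal --- the lone non-normal $\Syl{2}$ acts nontrivially on \emph{both}, so neither splits off. The paper treats the ``exactly two Sylows normal'' case explicitly: all three pairwise products $H_{p^aq^b},H_{p^ar^c},H_{q^br^c}$ exist, the two involving the non-normal Sylow cannot be normal (else $G$ would decompose), and Orbit--Stabilizer then forces many conjugates of each; this is what actually produces the bound $p+q+r+2$ in the first branch of the $\min$. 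Your derivation of the other two branches is also off: they do not come from Sylow~III constraints on $n_r$ in $G$, but from a dichotomy applied to product subgroups such as $H_{p^ar^c}$ --- either one such product contains multiple Sylow $r$-subgroups (so $n_r\ge p^i$ by Sylow~III \emph{inside} that product), or each contains a unique one, forcing at least $n_r\ge r+1$ distinct products. The quantities $\min(p^i+1,q^j,2r+1)$ and $\min(r+1,2q+\varepsilon)$ arise from running this dichotomy across the relevant products in the ``exactly one Sylow normal'' subcases.
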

\begin{proof} Recall that since $G$ is non-nilpotent, there must be at least one non-normal Sylow subgroup.  First we explore the Sylow subgroups of $G$ and their potential products by considering the number of Sylow subgroups which are normal in $G$.  The most straightforward situation is when all three are non-normal, in which case, by Sylow (III), there must be at least $2q+r+2$ distinct Sylow subgroups in $G$ (and potentially no product subgroups involving two different Sylows).

If instead exactly two of the Sylow subgroups are normal, then there exist product subgroups $H_{p^aq^b}$, $H_{p^ar^c}$, and $H_{q^br^c}$. The two constructed as products with the lone \emph{non-normal} Sylow subgroup cannot be normal themselves however, as that would imply that $G$ could be decomposed into a direct product of groups with coprime orders.  Since those product subgroups are self-stabilizing, the Orbit-Stabilizer Theorem implies that the size of their orbits under conjugation must divide whatever prime power is missing in their order.  For example, if $\Syl{R} \in \operatorname{Syl}_r(G)$ is the non-normal one, then we have at least $p+q$ subgroups of orders $p^ar^c$ or $q^br^c$ in addition to $r+3$ total Sylow subgroups for at least $p+q+r+3$ subgroups.  Similarly, if $\Syl{Q} \in \operatorname{Syl}_q(G)$ is non-normal, this count becomes $p+r+(q+3)$, while if $\Syl{P} \in \operatorname{Syl}_p(G)$ is non-normal, it is $q+r+(q+2)$ (which is $\geq p+q+r+3$ for all primes $p<q<r$). 

The most delicate case is when only one Sylow subgroup is normal, and we proceed based on whether $\Syl{R} \unlhd G$ or not. (i)  If $\Syl{R}$ is not normal, then $n_r \geq r+1$ by Sylow (III).  Moreover, if $\Syl{P} \unlhd G$ (resp. $\Syl{Q} \unlhd G$), then $n_p + n_q \geq q+2$ (resp. $q+1$).  In addition, we have product subgroups $H_{p^aq^b}$ and $H_{p^ar^c}$ (resp. $H_{q^br^c}$).  However, if any subgroup of order $p^ar^c$ (resp. $q^br^c$) contains multiple Sylow $r$-subgroups, then $n_r \geq p^i$ for some $i$ (resp. $n_r \geq q^j$ for some $j$) as well.   And if none of those subgroups contain multiple Sylow $r$-subgroups, then there must be at least $n_r \geq r+1$ of them.  Thus, counting all Sylow subgroups and products of them, we must have at least $q+\min(p^i+4,q^j+3,2r+4 %r+p+2,r+q+2
) = q+3+\min(p^i+1,q^j,2r+1)$, where $i$ and $j$ are minimal such that $p^i, q^j \geq r+1$.

(ii) If $\Syl{R}$ is normal, then there exist product subgroups $H_{p^ar^c}$ and $H_{q^br^c}$. 
As in the previous case, if any subgroup of order $p^ar^c$ contains multiple Sylow $p$-subgroups, then $n_p \geq r$, and if none do, then there must be exactly $n_p \geq q$ subgroups of order $p^ar^c$. Summarizing, for subgroups of order $p^a$ or $p^ar^c$, there must be at least $\min(r+1,2q)$.  Repeating the argument, if any subgroup of order $q^br^c$ contains multiple Sylow $q$-subgroups, then $n_q \geq r$, and if not, then there must be exactly $n_q \geq q+1$ subgroups of order $q^br^c$.  Summarizing, there must be least $\min(r+1,2q+2)$ %$\min(r+1,p+q+1)$
subgroups of orders $q^b$ or $q^br^c$%\Nash{, where $\ell$ is minimal such that $p^\ell \geq q+1$}
.  In total, if exactly one Sylow subgroup is normal, then $G$ must contain at least $\min\{q+3+\min(p^i+1,q^j, 2r+1),~ \min(r+1,2q+2) + \min(r+1,2q) + 1\}$ subgroups of orders $p^a$, $q^b$, $r^c$, $p^aq^b$, $p^ar^c$, or $q^br^c$.

For the rest of the subgroups, starting with the prime powers $p^i$, $1 \leq i < a$, as before these account for at least $(a-1)\min(b+2,p+1)$ subgroups (by pairing them up with the $q$-subgroups and $\Syl{R}$).  Similarly, each prime power $q^j$, $1 \leq j < b$ accounts for at least $(b-1)\min(c+2,q+1)$ subgroups (by pairing them up with the $r$-subgroups and $\Syl{P}$) and each power $r^\ell$, $1 \leq \ell < c$ accounts for $(c-1)\min(a+2,r+1)$ subgroups (by pairing them up with the $p$-subgroups and $\Syl{Q}$).  Together with $G$ and $\{e\}$, this is at least $(a-1)\min(b+2,p+1) + (b-1)\min(c+2,q+1) + (c-1)\min(a+2,r+1) + 2$ additional subgroups.%= a+b+c-1 + (a-1)\min(b+1,p) + (b-1)\min(c+1,q) + (c-1)\min(a+1,r)$

Furthermore, since $p+q+r+3 \leq 2q+r+2$ for all primes $p<q<r$, with some minor arithmetic simplifications we have shown that the inequality in (\ref{eq:p^aq^br^c}) holds.
\end{proof}

\begin{remark}  Note, we could have taken multiple perspectives when counting the subgroups of lower orders (i.e.\ by pairing them up in different ways).  Since all of these perspectives would be valid, the lower bound would be the maximum of each. However, for our purposes, this particular choice (which exhibits some level of symmetry in $a$, $b$, and $c$) was good enough.  Indeed, as soon as at least one of $a$, $b$, or $c$ is greater than 1, our bound shows that $|\Sub G| \geq 18$ when $r=5$ and $|\Sub G| \geq 20$ when $r\geq 7$.
\end{remark}

We now consider the special case when $|G|=pqr$ so that we may improve our bound slightly.

\begin{thm}\label{thm:pqr}
Let $G$ be non-nilpotent with $|G|=pqr$ $(p<q<r)$ that cannot be decomposed as a non-trivial product of subgroups with coprime orders.  Then $|\Sub G| \geq r+4 + \min(r+1, 2q)$.
\end{thm}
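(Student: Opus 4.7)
The plan is to carry out a case analysis on the normality pattern of the Sylow subgroups of $G$. Since $|G|=pqr$, every Sylow subgroup is cyclic of prime order, and Sylow~(III) forces $n_r \in \{1, pq\}$, $n_q \in \{1, r, pr\}$, and $n_p \in \{1, q, r, qr\}$. Moreover $G$ is solvable (since $|G|$ is squarefree), so a Hall $\{p,q\}$-subgroup of order $pq$ exists and supplies an additional guaranteed subgroup.

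First I would handle the case $n_r = pq$. Here $|N(\Syl{R})| = r$, so $\Syl{R}$ is self-normalizing, and any subgroup $H$ of order $pr$ or $qr$ would contain some Sylow $r$-subgroup $\Syl{R}_0$ which by internal Sylow would be normal in $H$, forcing $H \subseteq N(\Syl{R}_0)$ and contradicting $|H| > r$. Hence no such $H$ exists, and consequently $\Syl{P}, \Syl{Q}$ must also be non-normal (otherwise their products with $\Syl{R}$ would give excluded subgroups). Sylow~(III) then yields $n_p \geq q$ and $n_q \geq r$, so counting $\{e\}$, $G$, and the three classes of Sylow subgroups gives $|\Sub G| \geq 2 + pq + q + r \geq 2r + q + 3$, comfortably exceeding $r + 4 + \min(r+1, 2q)$.

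When $n_r = 1$, so $\Syl{R} \unlhd G$, I would first exclude the case of all Sylows being normal (which gives $G \cong \Z_{pqr}$ and violates non-decomposability), and then dispatch the two ``one-normal'' subcases via the semidirect product structure of $G$. If $\Syl{P}, \Syl{R}$ are normal but $\Syl{Q}$ is not, $\Syl{Q}$ acts on $\Syl{P}\Syl{R} \cong \Z_{pr}$ through $\Aut(\Z_p) \times \Aut(\Z_r) \cong \Z_{p-1} \times \Z_{r-1}$; since $q > p-1$, no element of order $q$ acts on $\Syl{P}$, so $\Syl{Q}$ centralizes $\Syl{P}$, yielding the decomposition $G = \Syl{P} \times \langle\Syl{R},\Syl{Q}\rangle$ and contradicting non-decomposability. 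If $\Syl{Q}, \Syl{R}$ are normal but $\Syl{P}$ is not, non-decomposability requires $\Syl{P}$ to act nontrivially on both $\Syl{Q}$ and $\Syl{R}$ (else $G$ splits off a factor of the trivially-acted-on Sylow), and such an order-$p$ action on $\Z_{qr}$ has trivial fixed-point set. This makes the centralizer of $\Syl{P}$ in $G$ equal to $\Syl{P}$ itself; since $N(\Syl{P})/\Syl{P}$ embeds in $\Aut(\Z_p) \cong \Z_{p-1}$, which has order coprime to $qr$, we conclude $N(\Syl{P}) = \Syl{P}$ and $n_p = qr$ -- far more than enough subgroups.

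This leaves the subcase where $\Syl{R}$ is normal and both $\Syl{P}, \Syl{Q}$ are non-normal. Here the key combinatorial observation is that any subgroup of order $\alpha r$ (for $\alpha \in \{p, q\}$) contains either $1$ or $r$ Sylow $\alpha$-subgroups of $G$, according as it is cyclic or non-abelian. Writing the Sylow $\alpha$-count as $x + ry$, where $x, y \geq 0$ count cyclic and non-abelian subgroups of order $\alpha r$, minimizing $2x + (r+1)y$ subject to $x + ry \geq q$ (respectively $\geq r$) yields bounds of $\min(r+1, 2q)$ and $r+1$ on the combined counts of Sylow $p$'s plus order-$pr$ subgroups and Sylow $q$'s plus order-$qr$ subgroups, respectively. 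Adding $\{e\}, G, \Syl{R}$, and the Hall $\{p,q\}$-subgroup gives $|\Sub G| \geq r + 5 + \min(r+1, 2q)$, beating the target. The main obstacle is the ``one-normal'' subcase analysis, which requires care in translating the non-decomposability hypothesis into precise fixed-point-free conditions on the relevant Sylow actions; once that is in hand, the remaining case is a clean application of the Sylow-counting lemma for order-$\alpha r$ subgroups.
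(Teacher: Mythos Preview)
Your proof is correct, and in the critical subcase it even yields $r+5+\min(r+1,2q)$, one more than the stated bound, thanks to the Hall $\{p,q\}$-subgroup you include. However, your route differs noticeably from the paper's.

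The paper begins by citing the classical fact (Cole--Glover) that $\Syl{R}$ is \emph{always} normal in a group of order $pqr$, so your entire $n_r = pq$ case is vacuous --- harmless, but unnecessary. The paper then immediately observes that $H_{qr}$ has index $p$ and is therefore normal, which forces $\Syl{P}$ to be non-normal in one line (else $G \cong \Syl{P} \times H_{qr}$). You instead reach the same exclusions via automorphism-group and fixed-point arguments; these are valid and more structural, but longer. In the case $\Syl{Q},\Syl{R} \unlhd G$, the paper counts conjugates of the non-normal $H_{pq}$ and $H_{pr}$ via the Orbit--Stabilizer Theorem to get $2r+q+5$, whereas you show $N(\Syl{P})=\Syl{P}$ and obtain $n_p = qr$, which is sharper but requires the $N/C$ argument. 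Finally, in the main case (only $\Syl{R}$ normal) the paper argues ad hoc that $H_{qr}$ contains all Sylow $q$-subgroups and then splits on whether a single $H_{pr}$ contains all Sylow $p$-subgroups; your linear-programming formulation (minimize $2x+(r+1)y$ subject to $x+ry \ge n_\alpha$) systematizes exactly that dichotomy and is arguably cleaner.

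In short: the paper's proof is quicker because it leans on two classical shortcuts (Cole--Glover and index-$p$ normality), while yours is more self-contained, trades brevity for structure, and squeezes out a slightly better constant by remembering the Hall $\{p,q\}$-subgroup.
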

\begin{proof}
It is known (see e.g.\ \cite{Cole}) in a group of order $pqr$ with $p<q<r$, that $\Syl{R} \in \operatorname{Syl}_r(G)$ must be a normal subgroup.  Hence there exist product subgroups $H_{pr}$ and $H_{qr}$, the latter of which must be normal as it has index $p$ (again, see Theorem 1 in \cite{Lam}).  Since $G$ cannot be decomposed into co-prime parts, it follows that $\Syl{P} \in \operatorname{Syl}_p(G)$ cannot be normal, as otherwise $G \cong \Syl{P} \times H_{qr}$.  From here, we will consider two cases based on whether $\Syl{Q} \in \operatorname{Syl}_q(G)$ is normal.

(1) If $\Syl{Q}$ is normal, then there exists a product subgroup $H_{pq}$ which cannot be normal (otherwise $G$ could be decomposed), thus there must be $r$ such subgroups by the Orbit-Stabilizer Theorem.  Note however, that with $\Syl{Q}$ unique, there can only be $r$ subgroups of order $pq$ when $n_p \geq r$.  In addition, the subgroup $H_{pr}$ cannot be normal either, thus there must be $q$ subgroups of order $pr$.  Together with $\Syl{Q}$, $\Syl{R}$, $H_{qr}$, $G$, and $\{e\}$ this shows that $|\Sub G| \geq q+2r+5$.

(2) If instead, $\Syl{Q}$ is not normal, then the normal subgroup $H_{qr}$ must contain all of them -- hence $n_q = r$ by Sylow (III).  In addition, since the product of any Sylow $p$-subgroup with $\Syl{R}$ will result in a group of order $pr$, it follows that either $H_{pr}$ contains all of the Sylow $p$-subgroups -- meaning $n_p = r$ -- or there must be $q$ subgroups of order $pr$ by the Orbit-Stabilizer Theorem.  Together with $\Syl{R}$, $G$, and $\{e\}$ this shows that $|\Sub G| \geq r+4 + \min(r+1, 2q)$.  Note that this is strictly less than the expression given in case (1).
\end{proof}

Together, Theorem~\ref{thm:p^aq^br^c} and Theorem~\ref{thm:pqr} imply that if there exists a non-nilpotent group $G$, with $|G|$ divisible by three primes, which cannot be decomposed as a direct product of subgroups with coprime orders, and fewer than 20 subgroups, then $|G|=30$, 42, 60, 90, or 150.  Next, we rule out all non-nilpotent groups whose orders are divisible by four or more primes.  The argument is much more elementary than in the three prime case as there are many more products involving only the Sylow subgroups themselves

\begin{thm}\label{thm:4primes}
Let $G$ be a non-nilpotent group with $|G|=p^aq^br^cs^dt$ for primes $p<q<r<s$, where $t \in \N$ is relatively prime to $pqrs$.  Then $|\Sub G| \geq 20$.
\end{thm}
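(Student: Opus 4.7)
The plan is to proceed by a case analysis on the number $k \in \{0, 1, 2, 3\}$ of Sylow subgroups among $\Syl{P}, \Syl{Q}, \Syl{R}, \Syl{S}$ (associated to the primes $p < q < r < s$) that are normal in $G$. Since $G$ is non-nilpotent we must have $k \le 3$, and by Sylow~(III) any non-normal Sylow $\pi$-subgroup satisfies $n_\pi \ge \pi + 1$. Together with $p \ge 2$, $q \ge 3$, $r \ge 5$, $s \ge 7$, this gives baseline bounds of $3, 4, 6, 8$ on the counts $n_p, n_q, n_r, n_s$ whenever the corresponding Sylow subgroup fails to be normal.

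I would first tally the \emph{free} subgroups: $\{e\}$, $G$, and the Sylow subgroups themselves are pairwise distinct (since Sylow subgroups for different primes have different orders), contributing at least $2 + n_p + n_q + n_r + n_s$ subgroups. This alone handles $k = 0$, giving $|\Sub G| \ge 2 + 3 + 4 + 6 + 8 = 23$. For $k \ge 1$, whenever $\Syl{\pi_0}$ is normal each product $\Syl{\pi_0}\Syl{\pi}$ with $\pi \ne \pi_0$ is a subgroup of a new composite order; for $k \ge 2$, the product $N$ of all normal Sylow subgroups is itself normal and nilpotent with $|\Sub N| \ge 2^k$ by Theorem~\ref{thm:GxHsubgroups}, and proper subgroups of $N$ multiplied with non-normal Sylows yield still more subgroups. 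A routine tally in each sub-case of $k \in \{1, 2\}$ (bookkeeping which prime is normal) gives the desired lower bound of $20$ in every situation other than the one where the unique non-normal Sylow is for the smallest prime.

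The main obstacle, and the heart of the proof, will therefore be the sub-case $k = 3$ in which only $\Syl{P}$ is non-normal, where $n_p$ can be as small as $p + 1 = 3$. Here I would write $G = N \rtimes \Syl{P}$ with $N = \Syl{Q}\Syl{R}\Syl{S}$ nilpotent, and observe that because the Sylow factors of $N$ are characteristic, the conjugation action of $\Syl{P}$ on $N$ splits into independent actions on each of $\Syl{Q}, \Syl{R}, \Syl{S}$. If $\Syl{P}$ acts non-trivially on exactly one factor, then $G$ decomposes as a direct product of subgroups with coprime orders, and combining Theorem~\ref{thm:GxHsubgroups} with the two-prime bound of Theorem~\ref{thm:p^aq^b} yields $|\Sub G| \ge (q + 3) \cdot 2 \cdot 2 \ge 24$. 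If instead $\Syl{P}$ acts non-trivially on two or more factors, then Orbit--Stabilizer applied to $\Syl{P}$ forces $n_p = |G|/|N_G(\Syl{P})|$ to be at least the product of the orders of those factors, already exceeding $20$. The same decomposition-versus-inflated-Sylow-count dichotomy resolves the borderline sub-cases of $k = 1$ and $k = 2$: either $G$ decomposes and Theorems~\ref{thm:GxHsubgroups}, \ref{thm:p^aq^b}, \ref{thm:p^aq^br^c}, and \ref{thm:pqr} combine to give the bound, or the divisibility constraints $n_\pi \mid |G|/|\Syl{\pi}|$ with $n_\pi \equiv 1 \pmod{\pi}$ force the relevant Sylow counts well above $\pi + 1$.
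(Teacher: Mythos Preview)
Your case split on $k \in \{0,1,2,3\}$ begins with the assertion ``Since $G$ is non-nilpotent we must have $k \le 3$,'' and this is false as stated.  Non-nilpotency only guarantees that \emph{some} Sylow subgroup of $G$ is non-normal; when $t>1$ that Sylow subgroup may well belong to a prime dividing $t$, leaving all four of $\Syl{P},\Syl{Q},\Syl{R},\Syl{S}$ normal.  The paper handles this explicitly as its case~(1).  The same blind spot reappears in your $k=3$ analysis: writing $G = N \rtimes \Syl{P}$ with $N=\Syl{Q}\Syl{R}\Syl{S}$ forces $|G|=p^aq^br^cs^d$, i.e.\ $t=1$, so the semidirect-product machinery you build there does not apply to the general statement.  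Both gaps are easy to repair (extra primes in $t$ only add subgroups), but they are genuine omissions.  There is also a smaller overreach: when $\Syl{P}$ acts non-trivially on two normal factors you claim $n_p$ is at least the product of their \emph{orders}, but only the product of the corresponding \emph{primes} is guaranteed (take $\Syl{Q}=\Z_{q^2}$ with a $p$-automorphism fixing the subgroup of order $q$).  The weaker bound still suffices, since $qr\ge 15$ already pushes the tally to $20$.

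Setting aside the $t$ issue, your strategy is genuinely different from the paper's.  The paper never decomposes $G$ or invokes Theorems~\ref{thm:p^aq^b}--\ref{thm:pqr}; instead it counts products of Sylow subgroups directly and, in the tight cases, applies Orbit--Stabilizer to \emph{composite-order} subgroups such as $H_{p^as^d}$ (either one such product contains several Sylow $p$-subgroups, forcing $n_p\ge s$, or there are several such products).  Your route---split off direct factors when the action is trivial and inflate $n_p$ via $C_N(\Syl{P})$ when it is not---is more structural and recycles the earlier theorems nicely, but it requires you to certify indecomposability before citing Theorems~\ref{thm:p^aq^br^c} or~\ref{thm:pqr}, and your ``routine tally'' for $k\in\{1,2\}$ is thinner than it looks (for instance, with $\Syl{R},\Syl{S}$ normal and $\Syl{P},\Syl{Q}$ not, the naive count of Sylows and single products lands at $18$, so the dichotomy really is needed there too).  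The paper's purely combinatorial bookkeeping avoids all of this at the cost of a longer case list.
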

\begin{proof}
We will break this up into 5 cases corresponding to the number of normal Sylow subgroups from within the collection of only $\Syl{P} \in \operatorname{Syl}_p(G)$, $\Syl{Q} \in \operatorname{Syl}_q(G)$, $\Syl{R} \in \operatorname{Syl}_r(G)$, and $\Syl{S} \in \operatorname{Syl}_s(G)$.  %Much of the argument below does not depend on which Sylow subgroups are normal, so we assume without loss of generality that the Sylow subgroups for the lowest primes are not normal when applicable.

(1) Suppose all four Sylow subgroups are normal. Since the product of normal subgroups is again normal, we can actually create 10 product subgroups $H_{p^aq^b}$, $H_{p^ar^c}$, $H_{p^as^d}$, $H_{q^br^c}$, $H_{q^bs^d}$, $H_{r^cs^d}$, $H_{p^aq^br^c}$, $H_{p^aq^bs^d}$, $H_{p^ar^cs^d}$, and $H_{q^br^cs^d}$.  In addition, we have the four Sylow subgroups themselves and $G$ and $\{e\}$ for at least 16 subgroups.  However, the fact that $G$ is not nilpotent implies that $t \neq 1$ and thus, there must exist at least one other Sylow subgroup for a prime dividing $t$ and at least 10 additional product subgroups.

(2) Suppose that there exists one non-normal Sylow subgroup.  The 10 product subgroups described in case (1) must all still exist, as well as three normal Sylow subgroups, $G$, and $\{e\}$. If $\Syl{S} {\not} {\unlhd} G$, then $n_s \geq s+1$ -- already giving us $16+s \geq 23$ subgroups.  If instead $\Syl{R} {\not} {\unlhd} G$, then $n_r \geq r+1$ -- already giving us $16+r \geq 21$.  If $\Syl{Q} {\not} {\unlhd} G$, then either a single product $H_{q^bs^d}$ contains multiple Sylow $q$-subgroups and $n_q \geq s$, or there are at least $p$ subgroups of order $q^bs^d$ -- this already gives us at least $15 + \min(s+1,p+q+1) \geq 21$.  Finally, if $\Syl{P} {\not} {\unlhd} G$, then either a single product $H_{p^as^d}$ contains multiple Sylow $p$-subgroups and $n_p \geq s$, or there are at least $q$ subgroups of order $p^as^d$ -- already giving us $15 + \min(s+1, 2q) \geq 21$.

(3) Suppose two of them are not normal, say $\Syl{P}$ and $\Syl{Q}$.  Then the 7 product subgroups that do not involve \emph{both} $p$ and $q$ must still exist.  Note however, that either $H_{p^ar^c}$ is normal -- and we can also create $H_{p^aq^br^c}$ -- or it is not normal, in which case there are multiple of them.  Thus, the product $p^ar^c$ must account for at least two subgroups.  A similar argument shows that $p^as^d$ must also account for at least two subgroups.
In addition, to the two unique Sylow subgroups, we must also have at least $2q+1$ of the other two types.  Together with the at least 9 product subgroups, $G$, and $\{e\}$, this is already $2q+14 \geq 20$ subgroups.  A similar argument assuming a different pair are normal will be identical except that there will be strictly more than $2q+1$ total Sylow subgroups.

(4) Suppose that three of them are not normal.  If $\Syl{S} {\not}{\unlhd} G$, then there are at least $2q+s+2$ Sylow subgroups.  Together with the three products involving the lone normal Sylow subgroup and $G$ and $\{e\}$ this is already $2q+s+7 \geq 20$ subgroups.  If instead, $\Syl{S} \unlhd G$, then  for each product subgroup $H_{p^as^d}$, $H_{q^bs^d}$, and $H_{r^cs^d}$, observe that either a single product subgroup contains all of the non-normal Sylow subgroups for the associated prime (implying that $n_p$, $n_q$, or $n_r \geq s$), or there must be multiple subgroups of that order (at least $q$, $p$, or $p$ respectively).  Together with $G$ and $\{e\}$ this is already at least $2q+r+10 \geq 21$ subgroups.

(5) Finally, if all four of those Sylow subgroups are not normal, then we must have at least $2q+r+s+3$ of them in total.  Together with $G$ and $\{e\}$ this is already $2q+r+s+5 \geq 23$ subgroups.
\end{proof}

%%%%%%%%%%%%%%%%%%%%%%%%%%%%%%%%%%%%%%%%%%%%%%%%%%%%%%%%%%%%%%%%%%%%%%%%%%%%%
\subsection{Classifying non-abelian groups with $|\Sub G| = k$ for $k \leq 19$}
%%%%%%%%%%%%%%%%%%%%%%%%%%%%%%%%%%%%%%%%%%%%%%%%%%%%%%%%%%%%%%%%%%%%%%%%%%%%
Theorems~\ref{thm:p^aq^b}, \ref{thm:p^aq^br^c}, \ref{thm:pqr}, and \ref{thm:4primes} together reduce the search space for non-abelian groups with 19 or fewer subgroups.  Specifically, any non-abelian group $G$ with $k \leq 19$ subgroups must either be a direct product of groups with coprime orders (whose individual subgroup counts multiply to $k$), or $G$ must be non-nilpotent with $|G|$ satisfying one of the options listed in Table~\ref{tab:options}.
\begin{table}[h]
\centering
\begin{tabular}{c||c||c}
$p^7q$ with $q=3$ & $p^2q^4$ %or $p^4q^2$ with $q\leq 5$
with $q=3$ & $2^i$ with $i \leq 6$\\
$p^6q$ with $q \leq 5$ & $p^2q^3$ with $q \leq 5$ & $3^i$ with $i \leq 5$ \\
$p^5q$ or $p^4q$ with $q \leq 7$ & $p^3q^2$ with $p=2$ and $q\leq 7$ & $5^i$ with $i \leq 3$ \\
$p^3q$ with $q\leq 11$ & $p^2q^2$ with $p=2,3$ and $q\leq 7$ %$q\leq 11$
& $7^i$ with $i \leq 3$  \\
$p^2q$ or $pq$ with $q\leq 13$ & $p^3q^3$ with $q=3$ \\
$pq^4$ or $pq^3$ with $q\leq 3$ & $pqr$ with $r\leq 7$\\
$pq^2$ with $q \leq 7$ & $p^2qr$ or $pq^2r$ or $pqr^2$ with $r=5$ \\

\end{tabular}
\caption{Potential values for $|G|$ if $|\Sub G| \leq 19$.}
\label{tab:options}
\end{table}

With this reduced search space, we can use GAP to search systematically beginning with smaller numbers of subgroups working up.  Of course, when $k$ is smaller than 19, we need not consider the entire search space, but we can avoid duplicating effort in this way. 
Table~\ref{Table: Non-Abelian Groups} lists the similarity classes of all non-abelian groups with 19 or fewer subgroups. Note we omit empty rows and, as before, any arbitrary primes that appear are assumed to be coprime to the others.

\begin{table}[h]
    \centering
    \begin{tabular}{c|c|c}
         $|\Sub G|$ & Similarity Classes & \# of \\
         & of Non-abelian Groups & Classes \\ \hline
        $6$ &  $Q_8$, $S_3$ & 2 \\  \hline
        $8$ &  $\Dic_{12}$, $D_{10}$ & 2  \\  \hline
        $10$  & $\Z_7 \rtimes \Z_3$, $\Z_3 \rtimes \Z_8$, $D_8$, $D_{14}$, $M_{27}$, $\Dic_{20}$,%$\Z_5 \rtimes \Z_4$, 
        $A_4$ & 7   \\  \hline
         $11$ & $Q_{16}$, $M_{16}$ & 2  \\  \hline
        $12$ & $Q_8 \times \Z_p$, $S_3 \times \Z_p$, $\Z_3 \rtimes \Z_{16}$, $\Dic_{28}$, %$\Z_7 \rtimes\Z_4$, 
        $\Z_7 \rtimes \Z_9$, $\Z_5 \rtimes \Z_8$ & 6   \\  \hline
        $14$ & $M_{32}$, $S_3 \times \Z_3$, $\Z_3 \rtimes \Z_{32}$, $\Z_5 \rtimes \Z_{16}$, $\operatorname{GA}(1,5)$, $\Z_7 \rtimes \Z_8$, $D_{22}$, & 11\\
        & $\Z_{27} \rtimes \Z_3$, $\Z_7 \rtimes \Z_{27}$,  $\Z_{11} \rtimes \Z_5$,  $\Z_{25} \rtimes \Z_5$ \\ \hline
        $15$ &  $\operatorname{SL}(2,3), \operatorname{SD}_{16},$ $\Z_4 \rtimes \Z_4$, $(\Z_2 \times \Z_2) \rtimes \Z_9$ & 4\\ \hline
        16 & $\Dic_{12} \times \Z_p$, $D_{10} \times \Z_p$, $D_{18}$, $D_{12}$, $\Z_5 \rtimes \Z_8$, $\Z_5 \rtimes \Z_{32}$, $\Z_3 \rtimes \Z_{64}$ & 13\\
        & $\Z_7 \rtimes \Z_{16}$, $\Dic_{44}$, %$\Z_{11} \rtimes \Z_4$, 
        $D_{26}$, $\Z_{13} \rtimes \Z_3$, $\Z_7 \rtimes \Z_{81}$, $\Z_{11} \rtimes \Z_{25}$  \\ \hline
        17 & $\Z_{32} \rtimes \Z_2$ & 1\\ \hline
        18 & $Q_8 \times \Z_{p^2}$, $S_3 \times \Z_{p^2}$, $\Z_8.\Z_4$, $\Z_3 \rtimes \Z_{128}$, $\Dic_{18}$, %$\Z_3 \rtimes Q_8$, 
        $\Z_5 \rtimes \Z_{16}$, $\Z_{81} \rtimes \Z_3$,   $\Z_7 \rtimes \Z_{243}$ & 15\\
        & $\Z_{49} \rtimes \Z_7$, $\Z_{13} \rtimes \Z_9$, $\Dic_{52}$, %$\Z_{13} \rtimes \Z_4$, 
        $\Z_{11} \rtimes \Z_{125}$, $\Z_{11} \rtimes \Z_8$, $\Z_7 \rtimes \Z_{32}$, $\Z_5 \rtimes \Z_{64}$ \\ \hline
        19 & $\Z_2 \times Q_8$, $D_{16}$, $(\Z_3 \times \Z_3) \rtimes \Z_3$, $\Dic_{36}$%$\Z_9 \rtimes \Z_4$ 
        & 4
    \end{tabular}
    \caption{Non-Abelian Groups with $19$ or fewer Subgroups}
    \label{Table: Non-Abelian Groups}
\end{table}

To further clarify, $\Dic_{n}$ represents the dicyclic group of order $n$, and we now describe the specific semi-direct products and non-split extensions listed above in terms of generators and relations -- but only when they are not unique. %With 10 subgroups, $\Z_3 \rtimes \Z_8 = \langle x,y \mid x^3=y^8=e, yxy^{-1} = x^{-1}\rangle$.  
With 12 subgroups, $\Z_5 \rtimes \Z_8 = \langle x,y \mid x^5=y^8=e, yxy^{-1} = x^{-1}\rangle$.  With 14 subgroups, $\Z_5 \rtimes \Z_{16} = \langle x,y \mid x^5=y^{16}=e, yxy^{-1} = x^{-1}\rangle$ and $\Z_{25} \rtimes \Z_5 = \langle x,y \mid x^{25}=y^5=e, yxy^{-1} = x^6\rangle$.  With 16 subgroups, $\Z_5 \rtimes \Z_8 = \langle x,y \mid x^5=y^8=e, yxy^{-1} = x^3\rangle$ and $\Z_5 \rtimes \Z_{32} = \langle x,y \mid x^5=y^32=e, yxy^{-1} = x^{-1}\rangle$.  With 17 subgroups $\Z_{32} \rtimes \Z_2 = \langle x,y \mid x^32 = y^2 = e, yxy^{-1} = x^{17}\rangle$. With 18 subgroups $\Z_8.\Z_4 = \langle x,y \mid x^8=e, x^4=y^4, yxy^{-1} = x^{-1}\rangle$, $\Z_3 \rtimes \Z_{128} = \langle x,y \mid x^3 = y^{128} = e, yxy^{-1} = x^{-1}\rangle$, $\Z_5 \rtimes \Z_{16} = \langle x,y \mid x^5=y^{16}=e, yxy^{-1} = x^3\rangle$, and $\Z_5 \rtimes \Z_{64} = \langle x,y \mid x^5 = y^{64} = e, yxy^{-1} = x^{-1}\rangle$.

\begin{remark}[$|\Sub G|$ prime]
At first, the authors suspected that a non-abelian group with a prime number of subgroups would have to be a $p$-group (as is true for abelian groups).  Discovering the counterexample of $A_5$ -- which has 59 subgroups -- we adjusted this conjecture to perhaps \emph{solvable} non-abelian groups.  However, as seen above, $Z_9 \rtimes \Z_4$ has 19 subgroups despite being solvable and not a $p$-group.
\end{remark}

Given the classification of both abelian and non-abelian groups with 19 or fewer subgroups, we can combine these results to give the first 19 terms in sequence A274847 \cite{OEIS}.  Those terms are 1, 1, 1, 2, 2, 5, 1, 7, 2, 12, 4, 11, 1, 17, 8, 22, 3, 22, 5.  (Once again, the 10th term should be 12, not 11.)  Further exploration using these techniques is possible, however, as we are already running into the limits of what GAP can check, it would require improving some of the bounding arguments made above.  One place ripe for improvement is the case of $p^aq^b$ with $a,b \geq 2$.  There are no such non-nilpotent groups with fewer than 20 subgroups when $q>3$ even though our bounds did not rule some of those cases out.

\end{document}